\journal{arXiv}
\newtheorem{theorem}{Theorem}[section]
\newtheorem{corollary}[theorem]{Corollary}
\newtheorem{lemma}[theorem]{Lemma}
\newtheorem{conjecture}[theorem]{Conjecture}
\newtheorem{example}[theorem]{Example}
\newtheorem{question}[theorem]{Question}
\begin{document}

\begin{frontmatter}

\title{Asymptotic Behaviour of the Containment of Certain Mesh Patterns}

\author[a,b]{Dejan Govc}					
\author[a,d]{Jason P. Smith}
\address[a]{Institute of Mathematics, University of Aberdeen, Aberdeen, UK}
\address[b]{Faculty of Mathematics and Physics, University of Ljubljana, Ljubljana, Slovenia}
\address[d]{Department of Mathematics and Physics, Nottingham Trent University, Nottingham, UK}
\fntext[fn1]{Both authors acknowledge support from EPSRC grant EP/P025072/. The first named author was also supported in part by the Slovenian Research Agency programme P1-0292 and grant N1-0083. The second named author was also supported in part by \'Ecole Polytechnique F\'ed\'erale de Lausanne.}

\begin{abstract}
We present some results on the proportion of permutations of length $n$
 containing certain mesh patterns as $n$ grows large, and give exact 
 enumeration results in some cases. In particular, we focus on mesh 
 patterns where entire rows and columns are shaded. We prove some general 
 results which apply to mesh patterns of any length, and then consider mesh patterns
 of length four. An important consequence of these results is to show that the 
 proportion of permutations containing a mesh pattern can take a wide range of values
 between $0$ and $1$.
\end{abstract}

\begin{keyword}
mesh patterns, enumerative combinatorics, permutation patterns
\end{keyword}

\end{frontmatter}

\section{Introduction}

Mesh patterns are a generalisation of permutations patterns, and were first introduced
 by Br\"and\'en and Claesson in \cite{Bra11}.
 A mesh pattern consists of a pair $(\pi,P)$, where $\pi$ is a permutation and $P$
 is a set of coordinates in a square grid. For example, $(312,\{(0,0),(1,2)\})$ is a
 mesh pattern, which we depict by
 \begin{center}
 \patt{0.5}{3}{3,1,2}[0/0,1/2][][][][][4].
 \end{center}

Permutations patterns have long been a topic of much interest, primarily due to their
 links to sorting algorithms, see \cite{Kit11} for an excellent overview of the field.
 Of particular interest in permutation patterns is the enumeration of the avoidance or containment class
 of particular permutations.
 Mesh patterns have been studied extensively since their introduction,
 see e.g.,~\cite{AKV13,Ten13,CTU15,JKR15,BGMU19}.
 The first systematic study of the avoidance of mesh patterns was conducted in \cite{HJSVU15},
 where enumeration results were given for the avoidance of $25$ patterns of length $2$.
 The first study of the distribution of the avoidance of mesh patterns was undertaken in \cite{KZ19}, 
 which was further extended in~\cite{KZZ20}.

In this paper we present some results on the proportion of permutations
 containing certain mesh patterns as $n$ grows large, that is, the asymptotic
 behaviour of
 $s_n^+(p)/n!$ where~$p$ is a mesh pattern and $s_n^+(p)$ is the number of 
 permutations of length $n$ containing at least one occurrence of $p$.
 In particular, does the limit in Expression~\eqref{eq:1}, which we call the \emph{containment limit of p}, exist and can we compute it?
 \begin{equation}\lim_{n\to\infty}\frac{s_n^+(p)}{n!}\label{eq:1}\end{equation}

It is known that in traditional permutation patterns as $n$ grows large the
 containment limit tends to $1$, which gives us our first result for mesh patterns. 
 For any permutation $\pi$ we get the containment limit
 $$\lim_{n\to\infty}\frac{s_n^+((\pi,\{\}))}{n!}=1.$$ 
 Moreover, if $p$ is a mesh pattern where every box is shaded, 
 then it is not possible for a larger permutation to contain $p$, 
 which gives us our second result
 $$\lim_{n\to\infty}\frac{s_n^+(p)}{n!}=0.$$ 
 It is often the case when studying permutation patterns that the asymptotics 
 of such proportions either tend to $0$ or $1$. So the motivation for this article is to find 
 mesh patterns where the containment limit lies strictly between $0$ and~$1$.

In Section~\ref{sec:2} we recall some definitions and notation.
 In Section~\ref{sec:3} we present some formulas for the containment limit of 
 mesh patterns $(\pi,R)$ for any permutation~$\pi$, and a fixed type of shading~$R$.
 In Section~\ref{sec:4} we present some formulas for the containment limit for 
 mesh patterns $(\pi,R)$, where~$\pi$ is a permutation of length four and~$R$ is fixed.
 We finish with some conjectures and further questions in Section~\ref{sec:5}.


\section{Definitions and Notation}\label{sec:2}
First we recall some definitions concerning permutation patterns.
 Let $[n]:=\{0,1,\dots,n\}$ and $[m,n]:=\{m,m+1,\dots,n\}$. We consider a permutation $\pi$ using one line notation, so as a
 sequence of the numbers in $[1,n]$ and we say the length of $\pi$, denoted $|\pi|$, is $n$.
 Let $S_n$ be the set of permutations of length $n$.
 We denote the number in the $i$-th position of $\pi$ by $\pi_i$.
 A permutation $\sigma$ \emph{occurs} in a permutation~$\pi$ if there is a subsequence,
 $\eta$, of $\pi$ whose letters appear in the same relative order of size as the letters
 of $\sigma$. The subsequence $\eta$ is called an \emph{occurrence} of~$\sigma$ in 
 $\pi$. If no such occurrence exists we say that $\pi$ \emph{avoids} $\sigma$.
 For example, $213$ occurs as a pattern in $3124$ as the subsequence $314$, 
 but $3124$ avoids $321$.

A \emph{mesh pattern} is a pair $(\pi,R)$, where $\pi$ is a permutation of length~$n$
 and $R\subseteq[n]\times[n]$. We depict a mesh pattern on a grid by putting dots in positions $(i,\pi_i)$, for all $i\in[n]$,
 and for each coordinate $(x,y)\in R$ shade the boxes whose south west corner is $(x,y)$.
 The \emph{length} of a mesh pattern is given by $|p|=|\pi|$.
 For example, $(3124,\{(0,2),(1,2),(3,3)\})$ is depicted by
 \begin{center}
 \patt{0.3}{4}{3,1,2,4}[0/2,1/2,3/3][][][][][4].
 \end{center}

Consider a mesh pattern $(\sigma,S)$ and an occurrence $\eta$ of $\sigma$ in $\pi$,
 in the classical permutation pattern sense.
 If $(i,j)$ is a dot in the plot of $\sigma$, let $(\alpha_\eta(i),\beta_\eta(j))$ be the corresponding dot in~$\pi$ given by $\eta$. 
 Each box $(i,j)$ of $S$ corresponds to a rectangular
 area in $\pi$ consisting of the boxes $$R_{\eta}(i,j)=[\alpha_\eta(i),\alpha_\eta(i+1)-1]\times[\beta_\eta(j),\beta_\eta(j+1)-1],$$
 where $\alpha_\eta(0)=\beta_\eta(0)=0$ and $\alpha_\eta(|\sigma|+1)=\beta_\eta(|\sigma|+1)=|\pi|+1$.
 For example, in Figure~\ref{fig:occEx} where $\eta$ is the occurrence in red, the area
 corresponding to the box $(1,1)$ is $$R_\eta(1,1)=\{(1,1),(2,1),(1,2),(2,2)\}.$$
 A point is contained in $R_\eta(i,j)$ if it is in the
 interior of $R_\eta(i,j)$, that is, not on the boundary.
 We say that $\eta$ is an \emph{occurrence} of the mesh pattern 
 $(\sigma,S)$ in the permutation~$\pi$ if there is no point in $R_{\eta}(i,j)$, 
 for all shaded boxes~$(i,j)\in S$.

\begin{figure}[h]
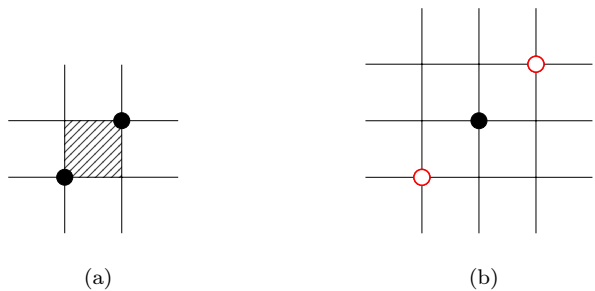
\centering
 \begin{subfigure}[b]{0.3\textwidth}
  \centering\patt{0.75}{2}{1,2}[1/1][][][][]
  \caption{}\label{subfiga}\end{subfigure}
 \begin{subfigure}[b]{0.3\textwidth}\centering
  \colpatt{0.75}{3}{1,2,3}[][1/1,3/3][]
  \caption{}\label{subfigc}\end{subfigure}
  \caption{A mesh pattern (a) and a permutation (b), where the hollow red points in~(b)
   indicate the only pair which is not an occurrence of (a) in (b).
  }\label{fig:occEx}
\end{figure}

\section{Patterns of general length}\label{sec:3}
In this section we consider results that relate to mesh patterns of any length.
 We begin with a useful lemma that allows us to construct bounds for the containment limit 
 of $p$ using other mesh patterns.
 
 \begin{lemma}\label{lem:subshade}
Consider two mesh patterns $p_1=(\pi,R_1)$ and $p_2=(\pi,R_2)$. If~$R_1\subseteq R_2$, then 
 \[
 s_n^+(p_1)\ge s_n^+(p_2).
 \]
\begin{proof}
Suppose $p_2$ occurs in $\tau$ as the occurrence $\eta$. 
 So $\eta$ is an occurrence of~$\pi$ in the classical sense, and since every shaded block $(i,j)$ of $p_1$ is also shaded
 in $p_2$ we know that there are no points in the areas $R_\eta(i,j)$, for all $(i,j)\in R_1$.
 Hence,~$\eta$ is also an occurrence of $p_1$. So every permutation that contains an occurrence of
 $p_2$ also contains an occurrence of $p_1$, which implies $s_n^+(p_1)\ge s_n^+(p_2)$.
\end{proof}
\end{lemma}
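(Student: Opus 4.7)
The plan is to prove the contrapositive-flavoured containment of sets: every permutation containing $p_2$ also contains $p_1$. Once established, this yields the set-inclusion $\{\tau\in S_n : p_2 \text{ occurs in }\tau\}\subseteq\{\tau\in S_n : p_1 \text{ occurs in }\tau\}$ and the inequality on cardinalities follows immediately.

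To carry this out, I would take any $\tau\in S_n$ in which $p_2$ occurs, witnessed by an occurrence $\eta$, and argue that the very same subsequence $\eta$ is an occurrence of $p_1$ in $\tau$. The key observation is that $p_1$ and $p_2$ share the same underlying permutation $\pi$, so $\eta$ is already a classical occurrence of $\pi$ in $\tau$, and the regions $R_\eta(i,j)$ are determined solely by $\eta$ and $\pi$, independently of the shading. The mesh condition for $p_2$ demands that $R_\eta(i,j)$ be empty for every $(i,j)\in R_2$; since $R_1\subseteq R_2$, this in particular forces emptiness for every $(i,j)\in R_1$, which is exactly the mesh condition defining an occurrence of $p_1$.

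There is no real obstacle here beyond careful unwinding of the definitions from Section~\ref{sec:2}; the entire content of the lemma is that adding more shaded boxes can only make the mesh constraint harder to satisfy, hence can only reduce the count of containing permutations. The argument works uniformly in $n$ and requires no induction or case analysis.
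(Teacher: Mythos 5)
Your argument is correct and is essentially identical to the paper's own proof: both show that any occurrence $\eta$ of $p_2$ in $\tau$ is already an occurrence of $p_1$, since the regions $R_\eta(i,j)$ depend only on $\eta$ and $\pi$ and the emptiness condition for $R_2$ subsumes that for $R_1$. Nothing further is needed.
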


Next we consider the case where all boxes are shaded except one row (or column) which is fully unshaded, such as the mesh pattern in Figure~\ref{subfig:1a}.
\begin{figure}[b]
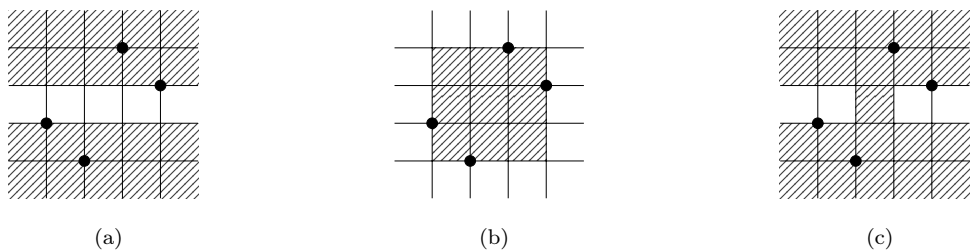
\centering
  \begin{subfigure}[b]{0.3\textwidth}
    \centering
    \patt{0.5}{4}{2,1,4,3}[0/0,1/0,2/0,3/0,4/0,0/1,1/1,2/1,3/1,4/1,0/3,1/3,2/3,3/3,4/3,0/4,1/4,2/4,3/4,4/4]
    \caption{}\label{subfig:1a}
  \end{subfigure}
  \begin{subfigure}[b]{0.3\textwidth}
    \centering
    \patt{0.5}{4}{2,1,4,3}[1/1,2/1,3/1,1/2,2/2,3/2,1/3,2/3,3/3]
    \caption{}\label{subfig:1b}
  \end{subfigure}
  \begin{subfigure}[b]{0.3\textwidth}
    \centering
    \patt{0.5}{4}{2,1,4,3}[0/0,1/0,2/0,3/0,4/0,0/1,1/1,2/1,3/1,4/1,2/2,0/3,1/3,2/3,3/3,4/3,0/4,1/4,2/4,3/4,4/4]
    \caption{}\label{subfig:1c}
  \end{subfigure}
  \caption{Examples of the mesh patterns considered in Theorem~\ref{thm:row}, Corollary~\ref{cor:boxed} and Theorem~\ref{thm:row1}} \label{fig:1}
\end{figure}

\begin{theorem}\label{thm:row}
Let $p=(\pi,R)$ be a mesh pattern with $|\pi|=k$ and $i\in[k]$, where $R=[k]\times([k]\setminus\{i\})$,
 that is, we fully shade all rows except row $i$ which is fully unshaded, then for $n\geq k$ we have
 \[
 s_n^+(p)=\frac{n!}{k!}\,\,\,\,\,\,\,\,\,\,\,\,\text{ and }\,\,\,\,\,\,\,\,\,\,\,\,\frac{s_n^+(p)}{n!}=\frac1{k!}.
 \]
\end{theorem}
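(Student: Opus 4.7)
The plan is to show that a permutation $\tau\in S_n$ contains $p$ if and only if its ``extreme'' values $\{1,\dots,i\}\cup\{n-k+i+1,\dots,n\}$ are arranged, in position order, so as to realize $\pi$; and that when this happens, $\tau$ contains exactly one occurrence of $p$. The count $n!/k!$ will then follow by choosing the $k$ positions of the extreme values ($\binom{n}{k}$ ways), placing them in the forced order ($1$ way), and filling the remaining $n-k$ positions with the ``middle'' values $\{i+1,\dots,n-k+i\}$ freely ($(n-k)!$ ways).

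The first and main step is the analysis of the shading. Suppose $\eta$ is an occurrence of $p$ in $\tau$, and adopt the convention $\beta_\eta(0)=0$ and $\beta_\eta(k+1)=n+1$. For each $y\in[k]\setminus\{i\}$ the horizontal strip corresponding to row $y$ is fully shaded, so no non-occurrence point of $\tau$ may have its value strictly between $\beta_\eta(y)$ and $\beta_\eta(y+1)$; since occurrence values also cannot lie strictly in this open interval, we must have $\beta_\eta(y+1)=\beta_\eta(y)+1$. Iterating this identity upward from $y=0$ forces $\beta_\eta(j)=j$ for $j\le i$, and iterating downward from $y=k$ forces $\beta_\eta(j)=n-k+j$ for $j\ge i+1$. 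Thus the set of values used by any occurrence is $\{1,\dots,i\}\cup\{n-k+i+1,\dots,n\}$.

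Conversely, I will argue that these row constraints are the only ones: the boxes in row $i$ are unshaded, and in any other row every column box is shaded, so checking shading reduces to checking rows. Given any $\tau$ such that the subsequence of $\tau$ read at the positions of the extreme values $\{1,\dots,i\}\cup\{n-k+i+1,\dots,n\}$ is order-isomorphic to $\pi$, this subsequence furnishes a classical occurrence $\eta$ of $\pi$; moreover the remaining values $\{i+1,\dots,n-k+i\}$ all lie strictly between $\beta_\eta(i)=i$ and $\beta_\eta(i+1)=n-k+i+1$, so the non-occurrence points sit inside the unshaded row-$i$ strip and all shading constraints are satisfied. Hence $\tau$ contains $p$.

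Uniqueness of the occurrence is immediate: the occurrence is forced to consist of the unique positions in $\tau$ where the extreme values appear, so any $\tau$ containing $p$ contains exactly one occurrence. The enumeration then proceeds as sketched above, giving $s_n^+(p)=\binom{n}{k}(n-k)!=n!/k!$ for $n\ge k$. The only subtle point I anticipate is justifying carefully that no column constraint survives once the row analysis is done, i.e.\ that the middle values, being automatically in the row-$i$ strip, satisfy every shaded-box constraint regardless of the column they fall in; this is the heart of why the arrangement of the middle values is completely free.
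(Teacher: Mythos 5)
Your proposal is correct and follows essentially the same route as the paper: the paper's (much terser) proof likewise observes that the shading forces the occurrence to use exactly the letters $[1,i]\cup[n-k+i+1,n]$, so that $p$ occurs precisely when these letters appear in the relative order prescribed by $\pi$, which happens for $n!/k!$ permutations. Your write-up simply supplies the details (the $\beta_\eta(y+1)=\beta_\eta(y)+1$ iteration, the converse via the unshaded row-$i$ strip, and the $\binom{n}{k}(n-k)!$ count) that the paper leaves implicit.
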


\begin{proof}
Let $0\leq i\leq k$ and suppose the $i$-th row is the one that is not shaded. 
 Then the shading requires that the subword of $\sigma$ realising the mesh pattern 
 consists of the letters ${[1,i]\cup[n-k+i+1,n]}$. Every permutations contains 
 these letters, so for the mesh pattern to occur, they just need to be permuted correctly. 
 This immediately implies the claim.
\end{proof}

\begin{corollary}
Let $p=(\pi,R)$ be a mesh pattern with $|\pi|=k$ and $i\in[k]$, where $R=([k]\setminus\{j\})\times[k]$,
 that is, we fully shade all columns except column $j$ which is fully unshaded, then for $n\geq k$ we have
 \[
 s_n^+(p)=\frac{n!}{k!}\,\,\,\,\,\,\,\,\,\,\,\,\text{ and }\,\,\,\,\,\,\,\,\,\,\,\,\frac{s_n^+(p)}{n!}=\frac1{k!}.
 \]
\begin{proof}
This follows from Theorem~\ref{thm:row} by rotational symmetry, that is, if $\tau$ contains $p$ then~$\hat{\tau}$ 
 contains $\hat{p}$, where $\hat{\tau}$ and $\hat{p}$ are obtained by rotating $\tau$ and $p$ by $90$ degrees.
\end{proof}
\end{corollary}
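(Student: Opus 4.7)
The plan is to reduce the statement to Theorem~\ref{thm:row} via the $90$-degree rotational symmetry on the grid, exactly as the authors' sketch indicates. The basic idea is that a clockwise rotation swaps the roles of rows and columns, so a mesh pattern with every column shaded except column $j$ is the rotation of one with every row shaded except some row $i$.

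First I would make the rotation precise. If $\tau \in S_n$ has plot $\{(i,\tau_i)\}_{i=1}^n$, then rotating the plot $90$ degrees clockwise about the centre of the $n \times n$ grid determines a unique permutation $\hat\tau \in S_n$ (characterised by $\hat\tau_{\tau_i}=n+1-i$). Extend this to mesh patterns by $\hat p=(\hat\pi,\hat R)$ with $\hat R=\{(y,k-x):(x,y)\in R\}$, so that each shaded box is carried to the corresponding rotated box.

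Next I would check that containment is preserved: $\tau$ contains $p$ if and only if $\hat\tau$ contains $\hat p$. Any occurrence $\eta$ of $\pi$ in $\tau$ rotates to an occurrence $\hat\eta$ of $\hat\pi$ in $\hat\tau$, and each forbidden rectangle $R_\eta(i,j)$ for $(i,j)\in R$ is carried precisely to the forbidden rectangle $R_{\hat\eta}(y,k-x)$ for the corresponding box $(y,k-x)\in\hat R$; emptiness of the former is equivalent to emptiness of the latter. Hence $s_n^+(p)=s_n^+(\hat p)$.

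Finally, applying the rotation to $R=([k]\setminus\{j\})\times[k]$ gives $\hat R=[k]\times([k]\setminus\{k-j\})$, so $\hat p$ is exactly of the form covered by Theorem~\ref{thm:row}, which yields $s_n^+(\hat p)=n!/k!$, and the claim follows. The only real obstacle is administrative — getting the indexing in the rotation right so that the two shading patterns correspond — but no substantive difficulty is expected.
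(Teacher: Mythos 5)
Your proposal is correct and follows exactly the route the paper takes: the paper's proof is a one-line appeal to the $90$-degree rotational symmetry between $\tau,p$ and $\hat\tau,\hat p$, and you have simply filled in the bookkeeping (the formula $\hat\tau_{\tau_i}=n+1-i$, the box map $(x,y)\mapsto(y,k-x)$, and the resulting shading $[k]\times([k]\setminus\{k-j\})$), all of which checks out.
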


Combining Lemma~\ref{lem:subshade} and Theorem~\ref{thm:row} gives us a lower bound for the containment limit of boxed patterns.
 \emph{Boxed patterns} are mesh patterns where everything is shaded except for the first and last row and the first and last column
 all of which are completely unshaded, such as the mesh pattern in Figure~\ref{subfig:1b}. Boxed patterns were extensively studied in \cite{AKV13}.
\begin{corollary}\label{cor:boxed}
Let $p=(\pi,R)$ be a boxed pattern, so $R=[1,|\pi|-1]\times[1,|\pi|-1]$, then
 \[
 \lim_{n\to\infty}\frac{s_n^+(p)}{n!}\geq\frac1{k!}.
 \]
\end{corollary}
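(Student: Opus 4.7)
The plan is to use Lemma~\ref{lem:subshade} to compare the boxed pattern against a fully-shaded-except-one-row pattern of the form covered by Theorem~\ref{thm:row}, and then read off the lower bound from that theorem.

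First I would set up the comparison pattern. Let $p = (\pi, R)$ with $R = [1, k-1] \times [1, k-1]$ be the boxed pattern, and fix $i \in \{0, k\}$ (either endpoint row works). Define the comparison pattern $p' = (\pi, R')$ with $R' = [k] \times ([k] \setminus \{i\})$, i.e., the mesh pattern in which every row except row $i$ is completely shaded.

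Next I would verify the containment $R \subseteq R'$. For any $(x,y) \in R$ we have $x \in [1, k-1] \subseteq [k]$ and $y \in [1, k-1]$. Since $i \in \{0, k\}$, we have $i \notin [1, k-1]$, hence $y \in [k] \setminus \{i\}$. This is a one-line check and is the only place any inclusion needs to be verified — the only potential pitfall would be choosing an $i$ strictly between $0$ and $k$, which would fail, so the hypothesis of the theorem must be invoked with a boundary row (or, by the obvious column analogue, a boundary column).

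Having established $R \subseteq R'$, Lemma~\ref{lem:subshade} gives $s_n^+(p) \geq s_n^+(p')$. Then Theorem~\ref{thm:row}, applied to $p'$, yields $s_n^+(p')/n! = 1/k!$ for every $n \geq k$. Dividing by $n!$ and passing to the limit inferior gives
\[
\liminf_{n\to\infty} \frac{s_n^+(p)}{n!} \geq \frac{1}{k!},
\]
which is the claimed bound. There is no real obstacle here: the entire argument is a one-step application of the preceding lemma and theorem, once the correct comparison shading is chosen.
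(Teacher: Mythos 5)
Your proof is correct and is exactly the argument the paper intends: the paper gives no separate proof but simply notes that the corollary follows by combining Lemma~\ref{lem:subshade} with Theorem~\ref{thm:row}, which is precisely your comparison of the boxed shading with the all-rows-but-one shading for a boundary row $i\in\{0,k\}$. Your explicit verification of $R\subseteq R'$ and the use of $\liminf$ are careful touches, but the route is the same.
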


In Theorem~\ref{thm:row} the containment limit is nonzero. We now show that shading one additional square, 
 for example the mesh pattern in Figure~\ref{subfig:1c}, reduces this limit to $0$.

\begin{theorem}\label{thm:row1}
Let $p=(\pi,R)$ be a mesh pattern with $|\pi|=k$ and $i,j\in[k]$, where ${R=[k]\times([k]\setminus\{i\})\cup\{(j,i)\}}$,
 that is, we fully shade all rows except row~$i$ which has exactly one shaded box, then for $n\geq k$ we have
 \[
 s_n^+(p)=\frac{(n-1)!}{(k-1)!}
 \,\,\,\,\,\,\,\,\,\,\,\,\text{ and }\,\,\,\,\,\,\,\,\,\,\,\,
 \lim_{n\to\infty}\frac{s_n^+(p)}{n!}=0.
 \]
\end{theorem}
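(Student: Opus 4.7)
The plan is to bootstrap off Theorem~\ref{thm:row}. Since the new shading $R$ strictly contains $[k]\times([k]\setminus\{i\})$, Lemma~\ref{lem:subshade} (or direct inspection) tells us every occurrence of $p$ in a permutation $\tau$ is simultaneously an occurrence of the pattern of Theorem~\ref{thm:row}. Hence, as established in that proof, the $k$ letters realising the occurrence are forced to be exactly $[1,i]\cup[n-k+i+1,n]$, and the remaining $n-k$ letters of $\tau$ are the ``middle'' values $[i+1,n-k+i]$, placed in the $n-k$ positions not used by the occurrence.

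Next I would translate the single additional shaded box $(j,i)$ into a combinatorial condition on the occurrence positions $p_1<\cdots<p_k$. The box forbids any point of $\tau$ from lying in column $j$ of the occurrence at a height strictly between the $i$-th and $(i+1)$-th occurrence values, i.e.\ strictly between $i$ and $n-k+i+1$. But every non-occurrence point already has its value in that range, so the condition is equivalent to the assertion that column $j$ contain no non-occurrence points at all. With the usual conventions $p_0=0$ and $p_{k+1}=n+1$, this reads
\[
p_{j+1}=p_j+1\ (1\le j\le k-1),\qquad p_1=1\ (j=0),\qquad p_k=n\ (j=k).
\]

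Finally I would count. In each of the three cases the constraint reduces the choice of $(p_1,\ldots,p_k)$ to choosing $k-1$ elements from an $(n-1)$-element set (in the interior case, by merging the two forced-adjacent positions into a single slot; in the boundary cases, by fixing one endpoint), giving $\binom{n-1}{k-1}$ possibilities. Once the positions are fixed, the $k$ forced occurrence values are placed in them in the unique order dictated by $\pi$, and the $n-k$ middle values permute freely in the remaining positions in $(n-k)!$ ways. Because the occurrence values are forced, distinct position-and-filling choices produce distinct permutations, so there is no overcounting and
\[
s_n^+(p)=\binom{n-1}{k-1}(n-k)!=\frac{(n-1)!}{(k-1)!},
\]
from which $s_n^+(p)/n!=1/(n\,(k-1)!)\to 0$ is immediate. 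The main obstacle I anticipate is keeping the boundary cases $j=0$ and $j=k$ in step with the generic interior case, which should be cosmetic once the conventions $p_0=0$ and $p_{k+1}=n+1$ are fixed.
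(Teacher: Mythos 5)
Your argument is correct, and it takes a more explicit route than the paper's. Both proofs rest on the same structural observation: the fully shaded rows force the occurrence values to be exactly $[1,i]\cup[n-k+i+1,n]$, so every non-occurrence letter has its value strictly between the $i$-th and $(i+1)$-th occurrence values, and the lone box $(j,i)$ therefore just forbids non-occurrence letters from column $j$ of the occurrence, i.e.\ forces $p_{j+1}=p_j+1$ (resp.\ $p_1=1$ or $p_k=n$ in the boundary cases). From there the paper argues by reduction: the realisation of $\pi_j$ is pinned to the spot immediately before that of $\pi_{j+1}$ (or to an end of the permutation), so one deletes that letter and the box $(j,i)$ to obtain a length-$(k-1)$ pattern of the type in Theorem~\ref{thm:row} and quotes that result; read carefully, the reduction should be $s_n^+(p)=s_{n-1}^+(\hat p)$, with the ambient length dropping along with the pattern length, which is what produces $(n-1)!/(k-1)!$ rather than $n!/(k-1)!$. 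You instead count directly: $\binom{n-1}{k-1}$ admissible position sets (by block-merging in the interior case, endpoint-fixing in the boundary cases), a forced placement of the occurrence values, $(n-k)!$ free arrangements of the middle values, and no overcounting because the forced values make the occurrence unique. Your version is self-contained, makes the formula $\binom{n-1}{k-1}(n-k)!=(n-1)!/(k-1)!$ transparent, and handles $j=0,k$ uniformly; the paper's is shorter and recycles Theorem~\ref{thm:row}. Both are valid.
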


\begin{proof}
Let $a=\pi_j$ when $j>0$ and $a=\pi_1$ when $j=0$. The position of $a$ in any occurrence of $p$ is uniquely determined, since if $j=0$, 
 the letter corresponding to $a$ must occur in the first place; if $j=k$, the letter corresponding to $a$ 
 must occur in the last place; and if~$1\leq j\leq k-1$, the letter corresponding to $a$ must appear immediately before $\pi_{j+1}$.
 This implies that $s_n^+(p)=s_n^+(\hat{p})$, where $\hat{p}$ is the 
 mesh pattern obtained from $p$ by deleting $a$ and unshading $(j,i)$. 
 So $\hat{p}$ is a mesh pattern of length $n-1$ with all rows shaded except one which is fully unshaded, and the result follows from Theorem~\ref{thm:row}.
\end{proof}


\section{Patterns of length four}\label{sec:4}

In this section we consider mesh patterns of length four, with a particular focus on the permutation $2143$ and its symmetries.
 We begin by considering the mesh pattern in Figure~\ref{fig:border}, where the shaded boxes are exactly the boundary boxes. 

\begin{figure}[b]
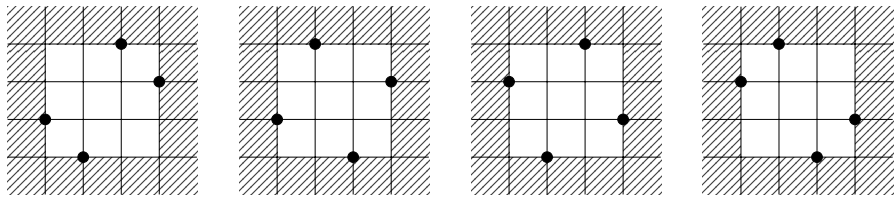
\centering
 \patt{0.5}{4}{2,1,4,3}[0/0,1/0,2/0,3/0,4/0,0/1,4/1,0/2,4/2,0/3,4/3,0/4,1/4,2/4,3/4,4/4]
 \patt{0.5}{4}{2,4,1,3}[0/0,1/0,2/0,3/0,4/0,0/1,4/1,0/2,4/2,0/3,4/3,0/4,1/4,2/4,3/4,4/4]
 \patt{0.5}{4}{3,1,4,2}[0/0,1/0,2/0,3/0,4/0,0/1,4/1,0/2,4/2,0/3,4/3,0/4,1/4,2/4,3/4,4/4]
 \patt{0.5}{4}{3,4,1,2}[0/0,1/0,2/0,3/0,4/0,0/1,4/1,0/2,4/2,0/3,4/3,0/4,1/4,2/4,3/4,4/4]
 \caption{The mesh patterns considered in Theorem~\ref{thm:border}}\label{fig:border}
\end{figure}

\begin{theorem}\label{thm:border}
Consider the mesh pattern $p=(\pi,(\{0,4\}\times[4])\cup([4]\times\{0,4\})$, where ${\pi\in\{2143,2413,3142,3412\}}$,
 that is, any of the mesh patterns in Figure~\ref{fig:border}.
 Then for $n\ge4$,
 \[
 s_n^+(p) = \binom{n-2}{2}^2(n-4)!
 \qquad\text{and}\qquad
 \lim_{n\to\infty}\frac{s_n^+(p)}{n!}=\frac14.
 \]
\end{theorem}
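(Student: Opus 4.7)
The plan is to exploit the rigidity imposed by the full-boundary shading to pin down the four dots of any occurrence, and then count directly. First, I would argue that if $\eta$ is an occurrence of $p$ in $\tau$ (with dots at positions $i_1<i_2<i_3<i_4$), then the fully shaded leftmost (resp.\ rightmost) column of $R$ forbids any dot of $\tau$ to the left (resp.\ right) of the occurrence, forcing $i_1=1$ and $i_4=n$. Similarly, the fully shaded bottom (resp.\ top) row forces the minimum (resp.\ maximum) of $\{\tau_{i_1},\dots,\tau_{i_4}\}$ to equal $1$ (resp.\ $n$). Hence the four dots of any occurrence are exactly $\tau_1$, $\tau_n$, and the unique entries of $\tau$ equal to $1$ and $n$; in particular, the occurrence is unique when it exists.

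For these four points to be distinct, we need $\tau_1,\tau_n\in\{2,\dots,n-1\}$, and conversely such a $\tau$ contains $p$ if and only if the pattern formed by those four points matches $\pi$. Reading off $\pi$ in each of the cases $\{2143,2413,3142,3412\}$, this matching reduces to two independent constraints: a prescribed inequality between $\tau_1$ and $\tau_n$ (determined by $\pi_1$ vs.\ $\pi_4$), and a prescribed order between the position of the letter $1$ and the position of the letter $n$ in $\tau$ (determined by the positions of $1$ and $4$ in $\pi$). For example, $\pi=2143$ requires $\tau_1<\tau_n$ and the letter $1$ to appear before the letter $n$; the other three patterns are analogous.

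The count then splits into three independent choices: $\binom{n-2}{2}$ ordered pairs $(\tau_1,\tau_n)\in\{2,\dots,n-1\}^2$ satisfying the required inequality, $\binom{n-2}{2}$ ordered pairs of positions in $\{2,\dots,n-1\}$ for the letters $1$ and $n$ satisfying the required order, and $(n-4)!$ arrangements of the remaining $n-4$ letters in the remaining $n-4$ positions. Uniqueness of the occurrence (from the first paragraph) ensures we are counting permutations rather than occurrences, so $s_n^+(p)=\binom{n-2}{2}^2(n-4)!$ for all four choices of $\pi$. The limit follows from
\[
\frac{s_n^+(p)}{n!}=\frac{\binom{n-2}{2}^{2}(n-4)!}{n!}=\frac{(n-2)(n-3)}{4\,n(n-1)}\longrightarrow\frac14.
\]

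The main (though modest) obstacle is the rigidity step: checking that the boundary shading really does pin down the four dots of any occurrence as the four specific points $\tau_1$, $\tau_n$, and the letters $1$ and $n$. Once this is in hand, the per-pattern analysis is just bookkeeping: each of the four permutations in $\{2143,2413,3142,3412\}$ places the roles $\{1,4\}$ in the two middle positions and $\{2,3\}$ at the ends, so the two independent sign choices always produce the same count $\binom{n-2}{2}^2(n-4)!$.
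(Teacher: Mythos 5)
Your proposal is correct and follows essentially the same route as the paper: the boundary shading pins the occurrence down to the four points $\tau_1$, $\tau_n$, and the letters $1$ and $n$, after which the count factors as $\binom{n-2}{2}$ choices of values times $\binom{n-2}{2}$ choices of positions times $(n-4)!$. Your explicit observation that all four permutations in $\{2143,2413,3142,3412\}$ place $\{1,4\}$ in the middle and $\{2,3\}$ at the ends is a slightly more detailed justification of the paper's ``the argument for the other permutations is analogous,'' but the argument is otherwise identical.
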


\begin{proof}
Assume $\pi=2143$, the argument for the other permutations is analogous. 
 Note that if $p$ occurs in a permutation $\sigma\in S_n$, the shading tells us that there can be nothing to the left of the $2$,
 nothing to the right of the $3$, nothing above $4$ and nothing below $1$.
 This means that $p$ can be realised in $\sigma$ in exactly one way, namely as the subword $\sigma_1 1 n \sigma_n$, subject to the restriction $1<\sigma_1<\sigma_n<n$.
 Conversely, a subword of this form is an occurrence of the pattern $p$.
 Therefore, to calculate $s_n^+(p)$, it suffices to count how many permutations contain this subword.

In a permutation $\sigma$ of length $n$, there are $n-2$ possible places for $1$ and $n$.
 Since $1$ has to occur before $n$, we can only choose the two places, leaving us with $\binom{n-2}{2}$ choices.
 Similarly, there are $n-2$ possible values of $\sigma_1$ and $\sigma_n$ and since we must have $\sigma_1<\sigma_n$, 
 this again gives us $\binom{n-2}{2}$ possibilities. Since there are $(n-4)$ remaining letters which we can freely permute, we conclude
 \[
 s_n^+(p)=\binom{n-2}{2}^2(n-4)!
 \]
 Therefore,
 \[
 \lim_{n\to\infty}\frac{\binom{n-2}{2}^2(n-4)!}{n!}=\lim_{n\to\infty}\frac{(n-2)(n-3)}{4n(n-1)} = \frac14.
 \]
\end{proof}

Considering the remaining $20$ permutations in $S_4$ with the same shading leads to a different result:

\begin{theorem}\label{thm:border0}
Consider the mesh pattern $p=(\pi,(\{0,4\}\times[4])\cup([4]\times\{0,4\})$, where $\pi\in S_4\setminus\{2143,2413,3142,3412\}$,
 that is, with the same shading as in Figure~\ref{fig:border}, but a different choice of permutation.
 Then,
 \[
 \lim_{n\to\infty}\frac{s_n^+(p)}{n!}=0.
 \]
\end{theorem}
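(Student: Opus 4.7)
My plan is to re-use the shading analysis from Theorem~\ref{thm:border} and then observe that for each of the remaining twenty permutations an additional coincidence forces one endpoint of $\sigma$, which collapses the count by a factor of $n$.

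First I would re-derive from the border shading the same structural constraints used in the previous proof: for any occurrence $\eta$ of $(\pi,R)$ in $\sigma\in S_n$ the fully shaded columns $0$ and $4$ and the fully shaded rows $0$ and $4$ force
\[
\alpha_\eta(1)=1,\qquad \alpha_\eta(4)=n,\qquad \beta_\eta(1)=1,\qquad \beta_\eta(4)=n.
\]
Equivalently, the four dots of the occurrence sit at positions $1$ and $n$ of $\sigma$ and include the values $1$ and $n$.

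Next I would observe that $\{\pi_1,\pi_4\}=\{2,3\}$ holds precisely for the four permutations handled in Theorem~\ref{thm:border}, since this is exactly the condition that positions $1$ and $4$ of $\pi$ carry the middle values and positions $2,3$ carry $1$ and $4$. Hence for every other $\pi\in S_4$ at least one of $\pi_1\in\{1,4\}$ or $\pi_4\in\{1,4\}$ must hold. A direct case analysis then gives: $\pi_1=1$ forces $\sigma_1=1$, $\pi_1=4$ forces $\sigma_1=n$, $\pi_4=1$ forces $\sigma_n=1$, and $\pi_4=4$ forces $\sigma_n=n$, because in each case the dot associated to the occurrence's leftmost or rightmost position must simultaneously carry the extreme value mandated by the top or bottom border shading.

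In every one of these twenty cases at least one of $\sigma_1$ or $\sigma_n$ is a prescribed value, which gives the crude bound $s_n^+(p)\le (n-1)!$, and hence
\[
\frac{s_n^+(p)}{n!}\le\frac{1}{n}\longrightarrow 0.
\]
There is no real obstacle here: the whole argument is driven by the observation that outside the four distinguished permutations of Theorem~\ref{thm:border} one of the border dots of any occurrence is forced to coincide with a corner of $\sigma$, and the rest is a book-keeping exercise on the twenty remaining permutations.
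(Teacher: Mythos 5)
Your proposal is correct and follows essentially the same route as the paper's own proof: outside the four permutations with $\{\pi_1,\pi_4\}=\{2,3\}$, one of $\pi_1,\pi_4$ lies in $\{1,4\}$, the border shading then pins down $\sigma_1$ or $\sigma_n$ to an explicit value in $\{1,n\}$, and the resulting bound $s_n^+(p)\le(n-1)!$ kills the limit. Your version merely spells out the characterisation of the excluded four permutations and the forcing of $\alpha_\eta,\beta_\eta$ in more detail than the paper does.
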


\begin{proof}
For each such $\pi$, we either have $\pi_1\in\{1,4\}$ or $\pi_4\in\{1,4\}$. Therefore, if $\sigma\in S_n$
 has an occurrence of $(\pi,R)$, the shading then prescribes either an explicit value of $\sigma_1\in\{1,n\}$ 
 or of $\sigma_n\in\{1,n\}$. But fixing one letter in $\sigma$ already reduces the number of such permutations 
 to $(n-1)!$, which is vanishingly small in $S_n$ as $n\to\infty$.
\end{proof}

The exact enumeration for the mesh patterns in Theorem~\ref{thm:border0} depends on the pattern chosen and we do not pursue it further here.

\begin{figure}[b]
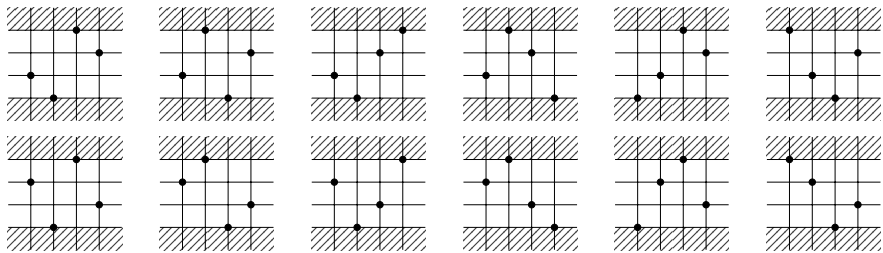

 \begin{center}
  \patt{0.30}{4}{2,1,4,3}[0/0,1/0,2/0,3/0,4/0,0/4,1/4,2/4,3/4,4/4]
  \patt{0.30}{4}{2,4,1,3}[0/0,1/0,2/0,3/0,4/0,0/4,1/4,2/4,3/4,4/4]
  \patt{0.30}{4}{2,1,3,4}[0/0,1/0,2/0,3/0,4/0,0/4,1/4,2/4,3/4,4/4]
  \patt{0.30}{4}{2,4,3,1}[0/0,1/0,2/0,3/0,4/0,0/4,1/4,2/4,3/4,4/4]
  \patt{0.30}{4}{1,2,4,3}[0/0,1/0,2/0,3/0,4/0,0/4,1/4,2/4,3/4,4/4]
  \patt{0.30}{4}{4,2,1,3}[0/0,1/0,2/0,3/0,4/0,0/4,1/4,2/4,3/4,4/4]\linebreak
  \patt{0.30}{4}{3,1,4,2}[0/0,1/0,2/0,3/0,4/0,0/4,1/4,2/4,3/4,4/4]
  \patt{0.30}{4}{3,4,1,2}[0/0,1/0,2/0,3/0,4/0,0/4,1/4,2/4,3/4,4/4]
  \patt{0.30}{4}{3,1,2,4}[0/0,1/0,2/0,3/0,4/0,0/4,1/4,2/4,3/4,4/4]
  \patt{0.30}{4}{3,4,2,1}[0/0,1/0,2/0,3/0,4/0,0/4,1/4,2/4,3/4,4/4]
  \patt{0.30}{4}{1,3,4,2}[0/0,1/0,2/0,3/0,4/0,0/4,1/4,2/4,3/4,4/4]
  \patt{0.30}{4}{4,3,1,2}[0/0,1/0,2/0,3/0,4/0,0/4,1/4,2/4,3/4,4/4]
 \end{center}
 \caption{The mesh patterns considered in the first case of Theorem~\ref{topbottom}, which consists of the length 4 permutations where 2 and 3 are not adjacent.}\label{fig:topbottom1}
\end{figure}
Next we consider mesh patterns where the shaded boxes are exactly the top and bottom rows 
 and $\pi$ is any length four permutation. The exact enumeration result depends on whether 
 $2$ and $3$ are consecutive or not, so there are two cases to treat, see Figures~\ref{fig:topbottom1} 
 and \ref{fig:topbottom2}. The limit, however, is the same in both cases.

\begin{figure}[t]
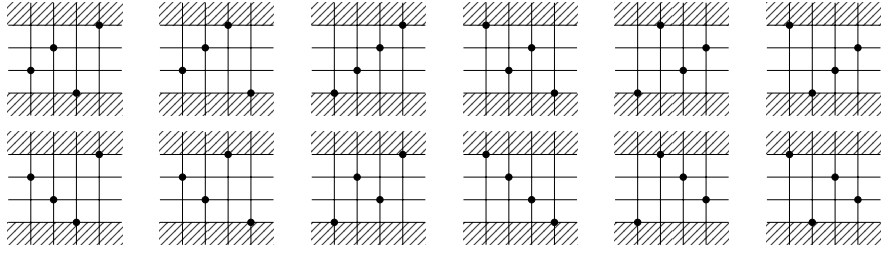

 \begin{center}
  \patt{0.30}{4}{2,3,1,4}[0/0,1/0,2/0,3/0,4/0,0/4,1/4,2/4,3/4,4/4]
  \patt{0.30}{4}{2,3,4,1}[0/0,1/0,2/0,3/0,4/0,0/4,1/4,2/4,3/4,4/4]
  \patt{0.30}{4}{1,2,3,4}[0/0,1/0,2/0,3/0,4/0,0/4,1/4,2/4,3/4,4/4]
  \patt{0.30}{4}{4,2,3,1}[0/0,1/0,2/0,3/0,4/0,0/4,1/4,2/4,3/4,4/4]
  \patt{0.30}{4}{1,4,2,3}[0/0,1/0,2/0,3/0,4/0,0/4,1/4,2/4,3/4,4/4]
  \patt{0.30}{4}{4,1,2,3}[0/0,1/0,2/0,3/0,4/0,0/4,1/4,2/4,3/4,4/4]\linebreak
  \patt{0.30}{4}{3,2,1,4}[0/0,1/0,2/0,3/0,4/0,0/4,1/4,2/4,3/4,4/4]
  \patt{0.30}{4}{3,2,4,1}[0/0,1/0,2/0,3/0,4/0,0/4,1/4,2/4,3/4,4/4]
  \patt{0.30}{4}{1,3,2,4}[0/0,1/0,2/0,3/0,4/0,0/4,1/4,2/4,3/4,4/4]
  \patt{0.30}{4}{4,3,2,1}[0/0,1/0,2/0,3/0,4/0,0/4,1/4,2/4,3/4,4/4]
  \patt{0.30}{4}{1,4,3,2}[0/0,1/0,2/0,3/0,4/0,0/4,1/4,2/4,3/4,4/4]
  \patt{0.30}{4}{4,1,3,2}[0/0,1/0,2/0,3/0,4/0,0/4,1/4,2/4,3/4,4/4]
 \end{center}
 \caption{The mesh patterns considered in the second case of Theorem~\ref{topbottom}, which consists of the length 4 permutations where 2 and 3 are adjacent.}\label{fig:topbottom2}
\end{figure}

\begin{theorem}\label{topbottom}
Let $p=(\pi,R)$, where $\pi\in S_4$ and $R=([4]\times0)\cup([4]\times4)$, 
 that is, the shaded cells are exactly those on the top and bottom rows.
 \begin{enumerate}
  \item If $2$ and $3$ do not appear consecutively in $\pi$,
   so $p$ is any of the mesh patterns in Figure~\ref{fig:topbottom1}, then
   \[
   s_n^+(p) = (n-2)!\sum_{k=2}^{n-2}\sum_{\ell=k+1}^{n-1}\left(1-\binom{n-\ell+k-1}{k-1}^{-1}\right).
   \]
  \item If $2$ and $3$ do appear consecutively in $\pi$, 
   so $p$ is any of the mesh patterns in Figure~\ref{fig:topbottom2}, then
   \[
   s_n^+(p) = (n-2)!\sum_{k=3}^{n-1} (n-k)\left(1-\frac1{(k-1)!}\right).
   \]
\end{enumerate}
In both cases,
 \[
 \lim_{n\to\infty}\frac{s_n^+(p)}{n!} = \frac12.
 \]
\end{theorem}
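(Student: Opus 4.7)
The plan is complementary counting, conditioned on the positions of $1$ and $n$ in $\sigma$. Because the top and bottom rows of $R$ are fully shaded, any occurrence of $p$ in $\sigma$ must realise the value $1$ at the dot whose pattern row is $1$, and the value $n$ at the dot whose pattern row is $4$. Let $a,d\in\{1,2,3,4\}$ denote the positions of these two dots in $\pi$, and let $K,L$ be the positions of $1$ and $n$ in $\sigma$; then $p_a=K$ and $p_d=L$ are forced, and we fix $K,L$ and count which of the $(n-2)!$ arrangements of $\{2,\ldots,n-1\}$ in the remaining positions admit at least one valid occurrence. The remaining dots $p_b,p_c$, corresponding to pattern values $2$ and $3$, must lie in specific intervals of $[1,n]$ determined by where $b,c$ sit relative to $a,d$, and their $\sigma$-values must satisfy the pattern order.

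In Case~1 a case check shows that $p_b$ and $p_c$ necessarily lie in two \emph{distinct} intervals among the three carved out by $K$ and $L$, so a valid extension fails precisely when the values in these two intervals are totally separated in the direction dictated by the pattern. Counting such ``bad'' arrangements by choosing which letters fill each part, arranging each part freely, and then arranging the third part, simplifies to $(n-2)!\,\binom{m-1}{k-1}^{-1}$ for appropriate $k,m$; subtracting from $(n-2)!$ and summing over $(K,L)$ yields the formula in part~(1) after a change of summation variables. In Case~2 the dots $p_b,p_c$ lie in the \emph{same} one of the three intervals, and a valid occurrence requires an ascending (or descending, depending on the pattern) pair in that interval; the bad arrangements are those in which the interval is entirely monotone in the wrong direction, contributing $(n-2)!/(k-1)!$ for each $(K,L)$ with interval length $k-1$, whence part~(2).

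For the limit, the leading contribution in each sum is $(n-2)!\binom{n-2}{2}$, since the number of admissible pairs $(k,\ell)$ equals $\binom{n-2}{2}$ in both ranges; dividing by $n!$ gives $(n-2)(n-3)/(2n(n-1))\to 1/2$. The correction sums are bounded by $O(n\log n)$ in Case~1 (the $k=2$ row contributes a harmonic sum) and by $(e-2)n$ in Case~2 (since $\sum_{k\ge 2}1/k!=e-2$), both negligible after normalising by $n!$. The main obstacle is that the geometric layout of $p_a,p_b,p_c,p_d$ relative to $K,L$ depends on which of the twelve patterns in Figure~\ref{fig:topbottom1} (resp.\ Figure~\ref{fig:topbottom2}) one starts with, so the bad-count derivation and the summation indices must be adjusted case by case; this is most cleanly handled by exploiting the Klein four-group of symmetries generated by horizontal and vertical reflection (which preserves the top/bottom shading), reducing each case to a few representatives for which the same formula is recovered.
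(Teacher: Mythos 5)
Your proposal is correct and follows essentially the same route as the paper: condition on the positions of $1$ and $n$ (forced by the top/bottom shading), count the ``bad'' fillings by complementary counting --- separated intervals in Case~1, a monotone block in Case~2 --- and extract the limit as the leading term $\binom{n-2}{2}(n-2)!$ minus a negligible correction. Your bounds on the correction sums ($O(n\log n)$ via the harmonic row, and $(e-2)n$) are a slightly more direct way to get the limit than the paper's per-term estimates and harmonic-number evaluation, but the argument is the same in substance, including the reduction of the twelve patterns per case to representatives by symmetry.
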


\begin{proof}
First we derive the two enumerative results. The proof proceeds by splitting the permutation $\sigma\in S_n$
 into three segments separated by the letters that realise $1$ and $4$ of the pattern.
 The two cases then arise by considering whether or not $2$ and $3$ lie in the same segment.

{\bf Case 1:} $2$ and $3$ are not adjacent. Suppose $\pi=2143$, the argument for the other permutations is analogous.
 Note that if $p$ occurs in a permutation $\sigma\in S_n$, the shading tells us that there can be nothing above $4$ and nothing below $1$. 
 This means that a realisation of~$p$ in $\sigma$ is exactly a subword $a1nb$, subject to the restriction $1<a<b<n$. 

First we fix $\sigma_k=1$ and $\sigma_\ell=n$, with $k<\ell$. Next we count the permutations which fail to satisfy the condition that
 a letter before the letter $1$ is smaller than a letter after the letter $n$ in $\sigma$.
 To do so we choose the letters which lie between $1$ and $n$, for which we have~$\binom{n-2}{\ell-k-1}(\ell-k-1)!$ possibilities
 as there are $\ell-k-1$ letters to pick out of $n-2$, and we can order them in any way. 
 The remaining letters must then be partitioned so that the largest $k-1$ are before $1$ in $\sigma$, and the smallest $n-\ell$ are after $n$.
 We can order both these sets in any way. So the number of permutations which don't satisfy the condition 
 is $$\binom{n-2}{\ell-k-1}(\ell-k-1)!(k-1)!(n-\ell)!=\frac{(n-2)!}{\binom{n-\ell+k-1}{k-1}}.$$
 Therefore, we have $(n-2)!-\frac{(n-2)!}{\binom{n-l+k-1}{k-1}}$ permutations that do satisfy the condition.
 We then sum over all $k$ and $\ell$ to get the first enumerative result:
 \[
 s_n^+(p)=(n-2)!\sum_{k=2}^{n-2}\sum_{\ell=k+1}^{n-1}\left(1-\binom{n-\ell+k-1}{k-1}^{-1}\right).
 \]

{\bf Case 2:} $2$ and $3$ are adjacent. Suppose $\pi=2314$, the argument for the other permutations is analogous.
 Again, if $p$ occurs in a permutation $\sigma\in S_n$, the shading tells us that there can be nothing above $4$ and nothing below $1$. 
 This means that a realisation of $p$ in $\sigma$ is exactly a subword $ab1n$, subject to the restriction $1<a<b<n$. 

Fix $\sigma_k=1$ and $\sigma_\ell=n$, with $k<\ell$. Next we count the permutations which fail to satisfy the condition that
 two of the letters appearing before $1$ in $\sigma$ occur in ascending order as required to realise the pattern.
 First we choose the letters appearing after $1$ (excluding $n$ which has already been fixed), for which we have $\binom{n-2}{n-k-1}(n-k-1)!$ possibilities
 as there are $n-k-1$ letters to pick out of $n-2$, and we can order them in any way. 
 The remaining letters are then the ones occurring before $1$ in $\sigma$ and for the condition to fail, they all
 have to appear in decreasing order. Once the letters after $1$ have been fixed, there is exactly one way to do this,
 so the number of permutations which don't satisfy the condition 
 is $$\binom{n-2}{n-k-1}(n-k-1)!=\frac{(n-2)!}{(k-1)!}.$$
 Therefore, we have $(n-2)!-\frac{(n-2)!}{(k-1)!}$ permutations that do satisfy the condition.
 We then sum over all $k$ and $\ell$ to get the second enumerative result:
 \[
 s_n^+(p)=(n-2)!\sum_{k=3}^{n-1}\sum_{\ell=k+1}^{n}\left(1-\frac{1}{(k-1)!}\right)=(n-2)!\sum_{k=3}^{n-1}(n-k)\left(1-\frac{1}{(k-1)!}\right).
 \]
 This concludes the proof of the two enumerative results.

Finally, we prove the asymptotic result. In both cases we have the exact same lower and upper bounds,
 but we obtain them in slightly different ways. In the first case, the upper bound is obtained as follows:
 \[
 \frac{s_n^+(p)}{(n-2)!}=\sum_{k=2}^{n-2}\sum_{\ell=k+1}^{n-1}\left(1-\binom{n-\ell+k-1}{k-1}^{-1}\right)\leq\sum_{k=2}^{n-2}\sum_{\ell=k+1}^{n-1}1=\binom{n-2}{2}.
 \]
 In the second case, we instead have:
 \[
 \frac{s_n^+(p)}{(n-2)!}=\sum_{k=3}^{n-1}(n-k)\left(1-\frac{1}{(k-1)!}\right)\leq\sum_{k=3}^{n-1}(n-k)=\binom{n-2}{2}.
 \]
 In both cases, this implies:
 \[
 \lim_{n\to\infty}\frac{s_n^+(p)}{n!}\le\lim_{n\to\infty}\frac{(n-2)!}{n!}\binom{n-2}{2}=\lim_{n\to\infty}\frac{(n-2)(n-3)}{2n(n-1)}=\frac12.
 \]
 To get the lower bound in the first case, first note that $1-\binom{n-\ell_1+k-1}{k-1}^{-1}\ge 1-\binom{n-\ell_2+k-1}{k-1}^{-1}$ if and only if $\ell_1\le\ell_2$.
 So if $\ell\le n-1$ we get $$1-\binom{n-\ell+k-1}{k-1}^{-1}\ge1-\binom{k}{k-1}^{-1}=\frac{k-1}{k}.$$ Which gives the following inequality
 \begin{align*}
  \frac{s_n^+(p)}{n!}&\ge
  \frac{(n-2)!}{n!}\sum_{k=2}^{n-2}(n-k-1)\frac{k-1}{k}\\
 \end{align*}
 In the second case, note that $1-\frac{1}{(k-1)!}\ge 1-\frac1{k-1}$ holds for all $k\geq 2$, and then change the summation index to $r=k-1$. This gives
 \begin{align*}
  \frac{s_n^+(p)}{n!}&\ge
  \frac{(n-2)!}{n!}\sum_{k=3}^{n-1}(n-k)\frac{k-2}{k-1}\\
  &=\frac{(n-2)!}{n!}\sum_{r=2}^{n-2}(n-r-1)\frac{r-1}{r}.
 \end{align*}
 Notice that we obtain the exact same lower bound in both cases. This bound can now be simplified as follows:
 \begin{align*}
  \frac{(n-2)!}{n!}\sum_{k=2}^{n-2}(n-k-1)\frac{k-1}{k}&=\frac{1}{n(n-1)}\sum_{k=2}^{n-2}\left[n-k-\frac{n-1}{k}\right]\\
  &=\frac{n(n-3)}{n(n-1)}-\frac{n(n-3)}{2n(n-1)}-\frac{(n-1)(H_{n-2}-1)}{n(n-1)},
 \end{align*}
 where $H_n$ is the $n$-th Harmonic number. Using the formula ${\displaystyle\lim_{n\to\infty}\left(H_n-\ln(n)\right)=\gamma}$ for Euler's constant \cite{Eul41},
 we get the following lower bound for the containment limit.
 \begin{align*}
  \lim_{n\to\infty}\frac{s_n^+(p)}{n!}\ge 1-\frac12-\lim_{n\to\infty}\frac{\gamma+\ln(n-2)}{n}=\frac12.
 \end{align*}
 Combining the two bounds implies the result.
\end{proof}

By trivial symmetries we get the following corollary.
\begin{corollary}\label{cor:topbottom0}
 Let $p$ be any of the following mesh patterns
 \begin{align*}
  &\patt{0.25}{4}{2,1,4,3}[0/0,0/1,0/2,0/3,0/4,4/0,4/1,4/2,4/3,4/4]
  \patt{0.25}{4}{2,4,1,3}[0/0,0/1,0/2,0/3,0/4,4/0,4/1,4/2,4/3,4/4]
  \patt{0.25}{4}{2,1,3,4}[0/0,0/1,0/2,0/3,0/4,4/0,4/1,4/2,4/3,4/4]
  \patt{0.25}{4}{2,4,3,1}[0/0,0/1,0/2,0/3,0/4,4/0,4/1,4/2,4/3,4/4]
  \patt{0.25}{4}{1,2,4,3}[0/0,0/1,0/2,0/3,0/4,4/0,4/1,4/2,4/3,4/4]
  \patt{0.25}{4}{4,2,1,3}[0/0,0/1,0/2,0/3,0/4,4/0,4/1,4/2,4/3,4/4]\\
  &\patt{0.25}{4}{3,1,4,2}[0/0,0/1,0/2,0/3,0/4,4/0,4/1,4/2,4/3,4/4]
  \patt{0.25}{4}{3,4,1,2}[0/0,0/1,0/2,0/3,0/4,4/0,4/1,4/2,4/3,4/4]
  \patt{0.25}{4}{3,1,2,4}[0/0,0/1,0/2,0/3,0/4,4/0,4/1,4/2,4/3,4/4]
  \patt{0.25}{4}{3,4,2,1}[0/0,0/1,0/2,0/3,0/4,4/0,4/1,4/2,4/3,4/4]
  \patt{0.25}{4}{1,3,4,2}[0/0,0/1,0/2,0/3,0/4,4/0,4/1,4/2,4/3,4/4]
  \patt{0.25}{4}{4,3,1,2}[0/0,0/1,0/2,0/3,0/4,4/0,4/1,4/2,4/3,4/4]
 \end{align*}
 Then,
 \[
 s_n^+(p) = (n-2)!\sum_{k=2}^{n-2}\sum_{\ell=k+1}^{n-1}\left(1-\binom{n-\ell+k-1}{k-1}^{-1}\right).
 \]
 Let $p$ be any of the following mesh patterns
 \begin{align*}
  &\patt{0.25}{4}{2,3,1,4}[0/0,0/1,0/2,0/3,0/4,4/0,4/1,4/2,4/3,4/4]
  \patt{0.25}{4}{2,3,4,1}[0/0,0/1,0/2,0/3,0/4,4/0,4/1,4/2,4/3,4/4]
  \patt{0.25}{4}{1,2,3,4}[0/0,0/1,0/2,0/3,0/4,4/0,4/1,4/2,4/3,4/4]
  \patt{0.25}{4}{4,2,3,1}[0/0,0/1,0/2,0/3,0/4,4/0,4/1,4/2,4/3,4/4]
  \patt{0.25}{4}{1,4,2,3}[0/0,0/1,0/2,0/3,0/4,4/0,4/1,4/2,4/3,4/4]
  \patt{0.25}{4}{4,1,2,3}[0/0,0/1,0/2,0/3,0/4,4/0,4/1,4/2,4/3,4/4]\\
  &\patt{0.25}{4}{3,2,1,4}[0/0,0/1,0/2,0/3,0/4,4/0,4/1,4/2,4/3,4/4]
  \patt{0.25}{4}{3,2,4,1}[0/0,0/1,0/2,0/3,0/4,4/0,4/1,4/2,4/3,4/4]
  \patt{0.25}{4}{1,3,2,4}[0/0,0/1,0/2,0/3,0/4,4/0,4/1,4/2,4/3,4/4]
  \patt{0.25}{4}{4,3,2,1}[0/0,0/1,0/2,0/3,0/4,4/0,4/1,4/2,4/3,4/4]
  \patt{0.25}{4}{1,4,3,2}[0/0,0/1,0/2,0/3,0/4,4/0,4/1,4/2,4/3,4/4]
  \patt{0.25}{4}{4,1,3,2}[0/0,0/1,0/2,0/3,0/4,4/0,4/1,4/2,4/3,4/4]
 \end{align*}
 Then,
 \[
 s_n^+(p) = (n-2)!\sum_{k=3}^{n-1} (n-k)\left(1-\frac1{(k-1)!}\right).
 \]
 Furthermore, in both cases:
 \[
 \lim_{n\to\infty}\frac{s_n^+(p)}{n!} = \frac12.
 \]
\end{corollary}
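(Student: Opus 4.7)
The plan is to derive the corollary from Theorem~\ref{topbottom} by invoking the symmetries of mesh patterns that fix the shading $R=([4]\times\{0\})\cup([4]\times\{4\})$. The operations to consider are reverse $r\colon\pi\mapsto(\pi_4,\pi_3,\pi_2,\pi_1)$ and complement $c\colon\pi\mapsto(5-\pi_1,5-\pi_2,5-\pi_3,5-\pi_4)$, together with their composition $rc$ (a $180^{\circ}$ rotation). Both $r$ and $c$ preserve $R$: the reverse does not move rows, while the complement interchanges the two fully shaded rows, so their union is fixed. For each such $\phi$, the map $\sigma\mapsto\phi(\sigma)$ is a bijection on $S_n$ which sends occurrences of $(\pi,R)$ to occurrences of $(\phi(\pi),R)$, whence $s_n^+((\pi,R))=s_n^+((\phi(\pi),R))$.

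Next I would compute the orbits of the Klein four-group $\langle r,c\rangle$ on each of the two lists of twelve patterns. For the first list (Case 1 of Theorem~\ref{topbottom}, where $2$ and $3$ are not adjacent in $\pi$) one finds four orbits, namely $\{2143,3412\}$, $\{2413,3142\}$, $\{2134,1243,3421,4312\}$, and $\{2431,4213,3124,1342\}$. For the second list (Case 2, where $2$ and $3$ are adjacent) the orbits are $\{1234,4321\}$, $\{1324,4231\}$, $\{2314,1423,3241,4132\}$, and $\{2341,4123,3214,1432\}$. In each case the orbits partition the full set of twelve patterns.

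Theorem~\ref{topbottom} proves the enumerative formula for the representatives $\pi=2143$ (Case 1) and $\pi=2314$ (Case 2); the symmetry argument above immediately propagates these counts to the entire orbit of each. To cover the remaining orbits I would re-run the argument of Theorem~\ref{topbottom} on one representative of each, adjusting only the subword template that realises the pattern (for example, for $\pi=2134$ the realising subword is $a1bn$ with $1<a<b<n$, whereas for $\pi=2413$ it is $an1b$ with $1<a<b<n$). In every case the two floating letters $a,b$ are subject to a single strict inequality, and swapping the roles $a<b$ and $a>b$ produces equinumerous families, so the same enumerative formula results. The asymptotic value $1/2$ is then inherited directly from Theorem~\ref{topbottom}.

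The principal obstacle is cosmetic rather than mathematical: one must verify case by case that permuting the positions of $1$, $n$, $a$, $b$ inside the subword in accordance with the chosen representative does not alter the counting of permutations containing it, which is exactly the content of the ``analogous argument'' already invoked in the proof of Theorem~\ref{topbottom}. This is the reason the corollary is announced as following from \emph{trivial} symmetries.
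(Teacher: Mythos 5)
Your proposal does not prove the stated corollary, because it starts from a misreading of the shading. In Corollary~\ref{cor:topbottom0} the shaded set is $\{0,4\}\times[4]$, i.e.\ the leftmost and rightmost \emph{columns}, whereas Theorem~\ref{topbottom} concerns $[4]\times\{0,4\}$, the top and bottom \emph{rows}. The only symmetries you invoke are the reverse $r$ and the complement $c$, and both of these preserve the set of row-shaded patterns (and, separately, the set of column-shaded patterns): they can never carry a pattern shaded on two rows to one shaded on two columns. Consequently your orbit computation merely re-derives Theorem~\ref{topbottom} for the row-shaded patterns and says nothing about the twenty-four column-shaded patterns of the corollary; likewise your fallback of ``re-running the argument'' on representatives such as $2134$ or $2413$ still describes occurrences forced to contain the letters $1$ and $n$, which is the row-shading condition. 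The symmetry actually needed is one that exchanges the horizontal and vertical directions, e.g.\ the inverse (reflection in the main diagonal), which yields $s_n^+\bigl((\pi,\{0,4\}\times[4])\bigr)=s_n^+\bigl((\pi^{-1},[4]\times\{0,4\})\bigr)$, or a quarter-turn rotation.

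This is not a cosmetic omission, because the case dichotomy of Theorem~\ref{topbottom} does not transport verbatim under a row--column exchange: ``the values $2$ and $3$ occupy adjacent positions in $\pi^{-1}$'' is equivalent to ``$|\pi_2-\pi_3|=1$'', which is a different condition on $\pi$ from ``the values $2$ and $3$ occupy adjacent positions in $\pi$''. For example, $\pi=2431$ sits in the first list of the corollary, yet $\pi^{-1}=4132$ has $2$ and $3$ in adjacent positions, so by the inverse symmetry $(2431,\{0,4\}\times[4])$ is enumerated by the Case~2 formula; the two formulas genuinely differ (at $n=6$ they give $100$ and $99$ respectively), so assigning each pattern to the correct case is essential. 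Any correct proof must track how the splitting criterion transforms under the orientation-exchanging symmetry; your argument, which keeps the permutation lists verbatim and never applies such a symmetry, cannot do this. (The same computation indicates that the $\langle r,c\rangle$-orbits $\{2431,4213,3124,1342\}$ and $\{2314,1423,3241,4132\}$ ought to trade places between the two lists, so the printed statement itself deserves a second look.)
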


The Shading Lemma is a useful result introduced in \cite[Lemma 3.11]{HJSVU15} which gives conditions allowing
 extra boxes to be shaded which does not change the containment set. 
 Applying the Shading Lemma combined with Theorem~\ref{topbottom}, for $\pi=2143$, gives the following corollary.

\begin{corollary}\label{cor:topbottom1}
 If $p$ is any of the following mesh patterns
 \begin{align*}
  &\patt{0.25}{4}{2,1,4,3}[0/0,1/0,2/0,3/0,4/0,0/4,1/4,2/4,3/4,4/4,0/1]
  \patt{0.25}{4}{2,1,4,3}[0/0,1/0,2/0,3/0,4/0,0/4,1/4,2/4,3/4,4/4,0/2]
  \patt{0.25}{4}{2,1,4,3}[0/0,1/0,2/0,3/0,4/0,0/4,1/4,2/4,3/4,4/4,1/1]
  \patt{0.25}{4}{2,1,4,3}[0/0,1/0,2/0,3/0,4/0,0/4,1/4,2/4,3/4,4/4,1/2]
  \patt{0.25}{4}{2,1,4,3}[0/0,1/0,2/0,3/0,4/0,0/4,1/4,2/4,3/4,4/4,1/2,1/1]
  \patt{0.25}{4}{2,1,4,3}[0/0,1/0,2/0,3/0,4/0,0/4,1/4,2/4,3/4,4/4,0/1,1/1]
  \patt{0.25}{4}{2,1,4,3}[0/0,1/0,2/0,3/0,4/0,0/4,1/4,2/4,3/4,4/4,1/2,0/2]
  \patt{0.25}{4}{2,1,4,3}[0/0,1/0,2/0,3/0,4/0,0/4,1/4,2/4,3/4,4/4,0/1,0/2]\\
  &\patt{0.25}{4}{2,1,4,3}[0/0,1/0,2/0,3/0,4/0,0/4,1/4,2/4,3/4,4/4,4/2]
  \patt{0.25}{4}{2,1,4,3}[0/0,1/0,2/0,3/0,4/0,0/4,1/4,2/4,3/4,4/4,4/3]
  \patt{0.25}{4}{2,1,4,3}[0/0,1/0,2/0,3/0,4/0,0/4,1/4,2/4,3/4,4/4,3/3]
  \patt{0.25}{4}{2,1,4,3}[0/0,1/0,2/0,3/0,4/0,0/4,1/4,2/4,3/4,4/4,3/2]
  \patt{0.25}{4}{2,1,4,3}[0/0,1/0,2/0,3/0,4/0,0/4,1/4,2/4,3/4,4/4,3/3,3/2]
  \patt{0.25}{4}{2,1,4,3}[0/0,1/0,2/0,3/0,4/0,0/4,1/4,2/4,3/4,4/4,3/3,4/3]
  \patt{0.25}{4}{2,1,4,3}[0/0,1/0,2/0,3/0,4/0,0/4,1/4,2/4,3/4,4/4,3/2,4/2]
  \patt{0.25}{4}{2,1,4,3}[0/0,1/0,2/0,3/0,4/0,0/4,1/4,2/4,3/4,4/4,4/3,4/2]
 \end{align*}
 or obtained by the union of the shaded boxes from a mesh pattern on the top row and a mesh pattern on the bottom row.
 Then,
 \[
 s_n^+(p) = (n-2)!\sum_{k=2}^{n-2}\sum_{\ell=k+1}^{n-1}\left(1-\binom{n-\ell+k-1}{k-1}^{-1}\right)
 \quad\text{and}\quad
 \lim_{n\to\infty}\frac{s_n^+(p)}{n!} = \frac12.
 \]
 \begin{proof}
  This follows from Theorem~\ref{topbottom}, the Shading Lemma \cite[Lemma 3.11]{HJSVU15}, and the Simultaneous Shading Lemma \cite[Lemma 7.6]{CTU15}.
 \end{proof}
\end{corollary}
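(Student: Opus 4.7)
The plan is to reduce everything to Theorem~\ref{topbottom} by showing that each listed mesh pattern $p$ has the same containment set, and therefore the same $s_n^+$, as the base pattern $p_0=(2143,[4]\times\{0,4\})$ treated in Case~1 of that theorem. One direction is free: since the shading of $p$ contains that of $p_0$, Lemma~\ref{lem:subshade} gives $s_n^+(p)\le s_n^+(p_0)$, so the task reduces to proving the reverse inequality, i.e.\ that every permutation containing $p_0$ also contains $p$.

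To set up the case analysis, I would recall from the proof of Theorem~\ref{topbottom} that an occurrence of $p_0$ in $\sigma\in S_n$ is exactly a subword $a\,1\,n\,b$ with $1<a<b<n$: the top and bottom row shadings force the $4$-letter to be $n$ and the $1$-letter to be $1$, leaving freedom only in the choice of $a$ before the position of $1$ and of $b$ after the position of $n$. Each added shaded box in the first row of the corollary carves out a region adjacent to the $2$-letter $a$, and each added box in the second row carves out a region adjacent to the $3$-letter $b$. The key intuition is that, given any $p_0$-occurrence, we may replace $a$ by the smallest admissible value to its left and $b$ by the largest admissible value to its right; this canonicalisation empties the candidate boxes, which is exactly what the Shading Lemma and its simultaneous version formalise.

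I would then verify the reduction pattern by pattern using the Shading Lemma \cite[Lemma 3.11]{HJSVU15} for the single-box extensions, and the Simultaneous Shading Lemma \cite[Lemma 7.6]{CTU15} for the pairs such as $\{(0,1),(1,1)\}$, $\{(0,2),(1,2)\}$, $\{(1,1),(1,2)\}$, and $\{(0,1),(0,2)\}$, which the basic lemma does not handle individually. The natural left--right symmetry swapping the roles of $a$ and $b$ then transports the verification to the eight patterns in the lower row, and the independence of the two sides justifies taking unions of a first-row and a second-row shading. The main obstacle is this bookkeeping of the hypotheses of the two lemmas for each configuration; once it is in place, the exact count and the limit $1/2$ are inherited directly from Theorem~\ref{topbottom}.
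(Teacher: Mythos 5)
Your proposal is correct and follows essentially the same route as the paper, whose entire proof is the citation of Theorem~\ref{topbottom} together with the Shading Lemma and the Simultaneous Shading Lemma; you simply make explicit the coincidence-of-containment-sets argument (one inequality from Lemma~\ref{lem:subshade}, the other from the two shading lemmas) that those citations encode. The extra detail on canonicalising the occurrence and on which lemma handles single boxes versus pairs is a faithful unpacking of the paper's one-line proof rather than a different method.
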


Note that we can derive a similar result to Corollary~\ref{cor:topbottom1} for the other mesh patterns 
 in Theorem~\ref{topbottom} and Corollary~\ref{cor:topbottom0}, but we omit them here for brevity.

We suspect that Theorem~\ref{topbottom} will extend to general mesh patterns of length $n$.
 Heuristically, shading the top and bottom row fixes the relative position of $1$ and $n$,
 with exactly half of the permutations having that relative position. 
 Containing the remaining $k-2$ points of the pattern means that the corresponding 
 permutation pattern occurs with its points appearing on the prescribed sides of $1$ and $n$. 
 As a consequence of the Marcus-Tardos Theorem~\cite{MT04}, we know that as $n$ increases all permutation patterns occur with probability~$1$, 
 so one would expect similar behaviour here. Which leads to the following conjecture.

\begin{conjecture}\label{conj:topbottom}
Let $p=(\pi,R)$ where $|\pi|=k$ and $R=([k]\times0)\cup([k]\times k)$, that is, the shaded cells are exactly those on the top and bottom rows, then
 \[
 \lim_{n\to\infty}\frac{s_n^+(p)}{n!} = \frac12.
 \]
\end{conjecture}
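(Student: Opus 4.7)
The plan is to generalise the combinatorial structure of Theorem~\ref{topbottom} and then close the gap with a second-moment argument. Write $\pi_a = 1$ and $\pi_b = k$ and assume without loss of generality that $a < b$ (the case $a > b$ is symmetric), and let $\pi' \in S_{k-2}$ be the reduction of $\pi$ with the entries $1$ and $k$ deleted, with $\ell_1 = a-1$, $\ell_2 = b-a-1$, $\ell_3 = k-b$. Since rows $0$ and $k$ are shaded, any occurrence of $p$ in $\sigma \in S_n$ is forced to map the pattern values $1$ and $k$ to the global minimum $\sigma_i = 1$ and maximum $\sigma_j = n$ of $\sigma$, at positions $i < j$. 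Writing $\sigma^\flat$ for $\sigma$ with its entries $1$ and $n$ removed, $\sigma$ contains $p$ if and only if $1$ precedes $n$ in $\sigma$ and $\pi'$ has a \emph{segmented} occurrence in $\sigma^\flat$ whose entries fall into the intervals $[1,i-1]$, $[i+1,j-1]$, $[j+1,n]$ in the respective quantities $(\ell_1,\ell_2,\ell_3)$.

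The upper bound is immediate: the condition that $1$ precedes $n$ is satisfied by exactly $n!/2$ permutations, giving $s_n^+(p) \le n!/2$ and hence $\limsup s_n^+(p)/n! \le 1/2$.

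For the lower bound, I would fix a small $\epsilon > 0$ and condition on the positions $(i,j)$ of $(1,n)$ in $\sigma$ with $i < j$. Call $(i,j)$ \emph{$\epsilon$-good} if each of the three induced segment lengths is at least $\epsilon n$; the bad pairs form only an $O(\epsilon)$ fraction of ordered pairs. Given an $\epsilon$-good pair, let $X$ count the segmented occurrences of $\pi'$ in $\sigma^\flat$. By the exchangeability of uniformly random permutations,
\[
\mathbb{E}[X] = \binom{N_1}{\ell_1}\binom{N_2}{\ell_2}\binom{N_3}{\ell_3}\frac{1}{(k-2)!} = \Theta(n^{k-2}),
\]
where $N_t$ denotes the length of the $t$-th segment. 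A second-moment calculation, classifying pairs of segmented position sets by their overlap $r$, shows that disjoint pairs contribute $\mathbb{E}[X]^2(1+O(n^{-1}))$ while the total contribution of pairs with $r \ge 1$ is $O(n^{2(k-2)-1})$, so $\mathrm{Var}(X) = O(n^{2(k-2)-1})$. Chebyshev's inequality then yields $\Pr(X = 0 \mid \sigma_i = 1, \sigma_j = n) = O(n^{-1})$. Aggregating over $\epsilon$-good pairs, the proportion of $\sigma \in S_n$ containing $p$ is at least $\tfrac{1}{2}(1 - O(\epsilon) - O(n^{-1}))$; letting $n \to \infty$ and then $\epsilon \to 0$ gives $\liminf s_n^+(p)/n! \geq 1/2$.

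I expect the main obstacle to be the careful bookkeeping of the overlap terms in the second-moment computation: for two segmented occurrences sharing $r$ positions, the joint probability is delicate because the two copies of $\pi'$ must agree on the overlap, and the count of such overlapping pairs must be summed across all partitions of $r$ among the three segments. A conceptually cleaner route would be a ``segmented'' Marcus--Tardos-type statement showing that the number of permutations of length $N$ avoiding a given segmented pattern is at most $\epsilon\cdot N!$ for $N$ sufficiently large, but I am not aware of such a result being stated explicitly in the literature.
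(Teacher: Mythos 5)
First, a point of orientation: the statement you were asked to prove is stated in the paper only as a \emph{conjecture}. The authors offer a heuristic (shading the top and bottom rows costs a factor of $\tfrac12$ by fixing the relative positional order of $1$ and $n$, after which the remaining pattern points ``should'' occur with probability tending to $1$, by analogy with Marcus--Tardos), but no proof. So there is nothing in the paper to compare your argument against; judged on its own terms, your proposal is essentially correct and, written out carefully, would upgrade the conjecture to a theorem. The reduction is right: shading the full bottom (resp.\ top) row forces the pattern value $1$ (resp.\ $k$) to be realised by the global minimum $1$ (resp.\ maximum $n$) of $\sigma$, so containment of $p$ is equivalent to $1$ and $n$ occupying positions in the order dictated by $\pi$ together with a segmented occurrence of $\pi'$ in the three blocks they cut out; the upper bound $s_n^+(p)\le n!/2$ is then immediate, and the aggregation at the end is legitimate because the events $\{\sigma_i=1,\ \sigma_j=n\}$ over distinct pairs $(i,j)$ are disjoint.

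The second-moment step is also sound, and the ``delicate bookkeeping'' you flag as the main obstacle is in fact unnecessary. For the variance one only needs an \emph{upper} bound on the overlapping terms: if two admissible position sets share $r\ge1$ positions, bound their joint occurrence probability by $\Pr(S\text{ induces }\pi')=1/(k-2)!$ (simply discard one of the two events, so no analysis of how the two copies of $\pi'$ interact on the overlap is needed), while the number of such pairs is $O(n^{2(k-2)-r})$; summing over $r\ge 1$ gives $\mathrm{Var}(X)=O(n^{2(k-2)-1})$. For disjoint pairs, the patterns induced on disjoint position sets of a uniformly random permutation are exactly independent, so their total contribution is at most $\mathbb{E}[X]^2$; Chebyshev then gives $\Pr(X=0)=O(n^{-1})$ for $\epsilon$-good $(i,j)$, with the implied constant depending on $\epsilon$ and $k$, which is harmless since you send $n\to\infty$ before $\epsilon\to0$. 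Two small points to attend to in a full write-up: the statement implicitly requires $k\ge2$ (for $k=1$ the two shaded rows force $n=1$ and the limit is $0$, so the conjecture should be read with that restriction), and when some $\ell_t=0$ the corresponding segment needs no length hypothesis, so $\epsilon$-goodness is merely a convenient sufficient condition. You do not need a segmented Marcus--Tardos-type avoidance bound; the second-moment method already delivers everything required.
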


In Corollary~\ref{cor:topbottom1} we see that shading the top and bottom rows, 
 and all but one box on the rightmost column, gives a containment limit of $\frac{1}{2}$.
 Next we show that if we shade all boxes in the rightmost column the containment limit is still $\frac{1}{2}$.

\begin{theorem}\label{thm:sideshade}
Let
 $p=$\patt{0.5}{4}{2,1,4,3}[0/0,1/0,2/0,3/0,4/0,4/1,4/2,4/3,0/4,1/4,2/4,3/4,4/4],
 then
 \[
 s_n^+(p) =(n-2)!\left[\frac{(n-2)(n-3)}{2}-\sum_{k=2}^{n-2}\frac{(n-k-1)}{k}\right]
 \;\;\;\;\;\;\;\;\;\;\text{and}\;\;\;\;\;\;\;\;\;\;
 \lim_{n\to\infty}\frac{s_n^+(p)}{n!} = \frac12.
 \]
\end{theorem}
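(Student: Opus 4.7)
The plan is to adapt the argument of Theorem~\ref{topbottom} Case 1, with an additional constraint coming from the fully shaded rightmost column. Since every column on the right of the "3" in the pattern $2143$ is shaded, the letter in $\sigma$ realizing $\pi_4=3$ must be the last letter of $\sigma$. Together with the top/bottom row conditions, this means an occurrence of $p$ in $\sigma\in S_n$ is exactly a subword of the form $a\,1\,n\,b$ where $b=\sigma_n$, the "$1$" and "$n$" are the literal minimum and maximum of $\sigma$, and $1<a<b<n$.

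Next I would fix positions $\sigma_k=1$, $\sigma_\ell=n$ with $2\le k<\ell\le n-1$, and fix the value $b=\sigma_n\in\{2,\dots,n-1\}$. Among the $(n-3)!$ permutations meeting these three constraints, an occurrence fails to exist precisely when no letter with value in $\{2,\dots,b-1\}$ appears in positions $\{1,\dots,k-1\}$, i.e.\ when positions $\{1,\dots,k-1\}$ are filled entirely by letters from $\{b+1,\dots,n-1\}$. This requires $b\le n-k$, and yields
\[
\binom{n-1-b}{k-1}(k-1)!\,(n-k-2)!
\]
failing permutations for each such $(k,\ell,b)$.

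Summing over $\ell$, $b$, $k$ then gives
\[
s_n^+(p)=\sum_{k=2}^{n-2}(n-1-k)(n-2)(n-3)! \;-\; \sum_{k=2}^{n-2}(n-1-k)(k-1)!(n-k-2)!\!\!\sum_{b=2}^{n-k}\binom{n-1-b}{k-1}.
\]
The first double sum collapses to $(n-2)!\cdot\tfrac{(n-2)(n-3)}{2}$. For the inner sum in the second term I would apply the hockey stick identity (substituting $j=n-1-b$) to obtain $\binom{n-2}{k}$, after which the factorials telescope to give $(n-2)!/k$. Putting the pieces together yields the claimed
\[
s_n^+(p)=(n-2)!\left[\frac{(n-2)(n-3)}{2}-\sum_{k=2}^{n-2}\frac{n-k-1}{k}\right].
\]

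Finally, for the limit I would divide by $n!$ and treat the two terms separately. The first part contributes $\frac{(n-2)(n-3)}{2n(n-1)}\to\tfrac{1}{2}$. For the correction term, I rewrite $\frac{n-k-1}{k}=\frac{n-1}{k}-1$, so that the sum equals $(n-1)H_{n-2}-(2n-4)$; after dividing by $n(n-1)$ this is of order $H_n/n\to 0$ by the standard asymptotics of the harmonic numbers. The main obstacle will be correctly handling the indicator $\mathbbm{1}[b\le n-k]$ when collapsing the $b$-sum via the hockey stick identity, and verifying that no boundary cases (small $k$, small $b$) are miscounted; once that bookkeeping is right, both the exact formula and the limit fall out cleanly.
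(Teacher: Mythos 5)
Your proposal is correct and follows essentially the same route as the paper's proof: the same characterisation of occurrences as subwords $a\,1\,n\,\sigma_n$, the same conditioning on the positions of $1$ and $n$ and the value of the last letter, the same complementary count of permutations whose first $k-1$ entries all exceed $\sigma_n$, and the same binomial-sum collapse to $(n-2)!/k$ followed by the harmonic-number asymptotics. The only cosmetic difference is that the paper additionally notes an upper bound of $\tfrac12$ via Lemma~\ref{lem:subshade} and Theorem~\ref{topbottom}, whereas you extract the limit directly from the exact formula, which is equally valid.
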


\begin{proof}
Note that if $p$ occurs in a permutation $\sigma\in S_n$, the shading tells us that there can be nothing above $4$,
 nothing below $1$, and nothing to the right of $3$. This means that a realisation of $p$ in $\sigma$ is a subword of 
 the form $a1n\sigma_n$, subject to the restriction $1<a<\sigma_n<n$. In other words, the pattern $p$ occurs in 
 $\sigma$ if and only if the letter $1$ occurs in $\sigma$ before the letter~$n$ and some letter occurring before 
 the letter $1$ is smaller than the last letter in $\sigma$ (which is not allowed to be $n$). 
 To calculate $s_n^+(p)$, we can count such permutations.

Let $1<k<\ell<n$ and $1<q<n$. There are $(n-3)!$ permutations $\sigma$ satisfying $\sigma_k=1,\sigma_\ell=n$ and $\sigma_n=q$.
 A permutation of this kind will contain the mesh pattern if and only if a letter smaller than $q$ occurs in one of the first $k-1$ places. 
 As usual, it is easier to count which permutations do not satisfy this condition: namely those whose first $k-1$ letters are all larger than $q$. 
 Note that this cannot happen if $k-1>n-q-1$, as there are only $n-q-1$ letters between $q$ and $n$. If on the other hand $k-1\leq n-q-1$, 
 we have $\binom{n-q-1}{k-1}$ possible choices of letters to put in the first $k-1$ places, each of which we can permute in $(k-1)!$ possible ways. 
 The remaining $(n-k-2)$ letters can be freely permuted, giving us $(n-k-2)!$ choices. Therefore, among the $(n-3)!$ permutations satisfying the restrictions above,
 \[
 \binom{n-q-1}{k-1}(k-1)!(n-k-2)!=(n-3)!\frac{\binom{n-q-1}{k-1}}{\binom{n-3}{k-1}}
 \]
 of them do not contain the mesh pattern. We subtract this number from $(n-3)!$ to obtain the number of those that do contain it:
 \[
 (n-3)!\left(1-\frac{\binom{n-q-1}{k-1}}{\binom{n-3}{k-1}}\right)
 \]
 To obtain the total number of such permutations in $S_n$ we now sum over all $k,\ell$ and $q$:
 \[
 s_n^+(p) = \sum_{q=2}^{n-1}\sum_{k=2}^{n-2}\sum_{\ell=k+1}^{n-1}(n-3)!\left(1-\frac{\binom{n-q-1}{k-1}}{\binom{n-3}{k-1}}\right).
 \]
 Since $\ell$ does not appear in the sum, we can replace the corresponding summation symbol by a multiplicative factor, giving the formula:
 \begin{align*}
  s_n^+(p) &= \sum_{q=2}^{n-1}\sum_{k=2}^{n-2}(n-k-1)(n-3)!\left(1-\frac{\binom{n-q-1}{k-1}}{\binom{n-3}{k-1}}\right)\\
  &=(n-3)!\sum_{k=2}^{n-2}\left[(n-2)(n-k-1)-\frac{(n-k-1)}{\binom{n-3}{k-1}}\sum_{q=2}^{n-1}\binom{n-q-1}{k-1}\right]\\
  &=(n-3)!\left[\frac{(n-2)^2(n-3)}{2}-\sum_{k=2}^{n-2}\frac{(n-k-1)}{\binom{n-3}{k-1}}\binom{n-2}{k}\right]\\
  &=(n-3)!\left[\frac{(n-2)^2(n-3)}{2}-\sum_{k=2}^{n-2}\frac{(n-k-1)(n-2)}{k}\right]\\
  &=(n-2)!\left[\frac{(n-2)(n-3)}{2}-\sum_{k=2}^{n-2}\frac{(n-k-1)}{k}\right]\\
 \end{align*}

Next, we calculate the containment limit.
 By Theorem~\ref{topbottom} and
 Lemma~\ref{lem:subshade} we get an upper bound of $\displaystyle\lim_{n\to\infty}\frac{s_n^+(p)}{n!} \le \frac12$.
 The lower bound is given by:
 \begin{align*}
 \lim_{n\to\infty}\frac{s_n^+(p)}{n!} &= \lim_{n\to\infty}\frac{1}{n(n-1)}\left[\frac{(n-2)(n-3)}{2}-\sum_{k=2}^{n-2}\frac{(n-k-1)}{k}\right] \\
          &= \lim_{n\to\infty}\frac{1}{n(n-1)}\left[\frac{(n-2)(n-3)}{2}-(n-1)(H_{n-2}-1)+(n-3)\right]
        =\frac12
 \end{align*}
 Combining the two bounds implies $\lim_{n\to\infty}\frac{s_n^+(p)}{n!}=\frac12$.
 \end{proof}

We cannot apply the Shading Lemma to Theorem~\ref{thm:sideshade}, since the shaded boxes on the right hand side
 cause the conditions of the Lemma to no longer be satisfied. But we do get the following corollary from trivial
 symmetries.

\begin{corollary}\label{cor:sideshade}
If $p$ is any of the following mesh patterns:
 \begin{center}
  \patt{0.25}{4}{2,1,4,3}[0/0,0/1,0/2,0/3,0/4,1/4,2/4,3/4,4/0,4/1,4/2,4/3,4/4]
  \patt{0.25}{4}{2,1,4,3}[0/0,0/1,0/2,0/3,0/4,1/0,2/0,3/0,4/0,4/1,4/2,4/3,4/4]
  \patt{0.25}{4}{3,4,1,2}[0/0,0/1,0/2,0/3,0/4,1/0,2/0,3/0,4/0,4/1,4/2,4/3,4/4]
  \patt{0.25}{4}{3,4,1,2}[0/0,0/1,0/2,0/3,0/4,1/4,2/4,3/4,4/0,4/1,4/2,4/3,4/4]
  \patt{0.25}{4}{2,1,4,3}[0/0,0/4,1/0,1/4,2/0,2/4,3/0,3/4,4/0,4/1,4/2,4/3,4/4]
  \patt{0.25}{4}{2,1,4,3}[0/0,0/1,0/2,0/3,0/4,1/0,1/4,2/0,2/4,3/0,3/4,4/0,4/4]
  \patt{0.25}{4}{3,4,1,2}[0/0,0/1,0/2,0/3,0/4,1/0,1/4,2/0,2/4,3/0,3/4,4/0,4/4]
  \patt{0.25}{4}{3,4,1,2}[0/0,0/4,1/0,1/4,2/0,2/4,3/0,3/4,4/0,4/1,4/2,4/3,4/4],
 \end{center}
 \[
 \text{then}\;\;\;\;\;\
 s_n^+(p) = \sum_{q=2}^{n-1}\sum_{k=2}^{n-2}(n-k-1)(n-3)!\left(1-\frac{\binom{n-q-1}{k-1}}{\binom{n-3}{k-1}}\right)
 \;\;\;\;\;\;\;\text{and}\;\;\;\;\;\;\;
 \lim_{n\to\infty}\frac{s_n^+(p)}{n!} = \frac12.
 \]
\end{corollary}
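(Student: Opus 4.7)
The plan is to deduce this corollary from Theorem~\ref{thm:sideshade} by invoking the dihedral group of symmetries acting on mesh patterns. Write $p_0 = (2143, R_0)$ for the pattern treated in Theorem~\ref{thm:sideshade}, where $R_0$ shades the full top and bottom rows together with the interior of the rightmost column. The eight-element group generated by reverse $r$, complement $c$, and inverse $\iota$ acts on mesh patterns by the evident geometric transformations of the plot (reflecting left--right, reflecting top--bottom, and transposing about the main diagonal). Each such symmetry $\phi$ induces a bijection $S_n \to S_n$ under which an occurrence of $(\pi,R)$ in $\sigma$ corresponds to an occurrence of $\phi(\pi,R)$ in $\phi(\sigma)$; consequently $s_n^+$ is invariant under this action, and it suffices to show that the eight displayed diagrams are precisely the orbit of $p_0$.

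First I would record the action on the permutation $2143$: reversal yields $3412$, complementation yields $3412$, and inversion fixes $2143$. Hence the orbit of the permutation alone is exactly $\{2143,3412\}$, and each of these permutations must appear four times in the orbit of $p_0$, carrying four distinct shadings. The second step is a direct bookkeeping calculation on $R_0$: the complement $c$ swaps the top and bottom rows while fixing the right column; the reverse $r$ sends the right column to the left column while fixing the top and bottom rows; and the inverse $\iota$ sends rows to columns, so it maps the top and bottom rows to the right and left columns and maps the right column to the top row. Composing these operations in all eight ways produces eight shadings, which I would then verify one-by-one against the coordinate lists in the displayed diagrams.

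Once the orbit has been matched with the list, the identity $s_n^+(p) = s_n^+(p_0)$ for each listed $p$ follows from the symmetry bijection, and the closed-form expression together with the containment limit $\frac{1}{2}$ is read off directly from Theorem~\ref{thm:sideshade}. The only potential pitfall is making sure that the image of $R_0$ under each of the eight group elements is the exact shading drawn in the corresponding diagram and not a different shading of the same permutation; this is mechanical but must be done carefully for every pattern. No new asymptotic estimates are required, because both bounds on the containment limit are imported wholesale from Theorem~\ref{thm:sideshade} via the bijections.
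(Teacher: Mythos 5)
Your proposal is correct and takes essentially the same route as the paper: the paper derives this corollary from Theorem~\ref{thm:sideshade} precisely by the ``trivial symmetries'' you describe, the eight displayed diagrams being the (free) orbit of that pattern under the dihedral action generated by reverse, complement and inverse. The only small point to keep in mind is that the enumeration formula quoted here is the unsimplified double sum appearing midway through the proof of Theorem~\ref{thm:sideshade}, which that proof shows equals the closed form stated in the theorem, so nothing further is needed.
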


\begin{theorem}\label{thm:nocorner}
Let $p=(2143,R)$ be the pattern with $R=([3]\times 0)\cup([4]\times 4)$, as in the picture
 \begin{center}
  \patt{0.5}{4}{2,1,4,3}[0/0,1/0,2/0,3/0,0/4,1/4,2/4,3/4,4/4]
 \end{center}
 Then,
 \[
 s_n^+(p) = \sum_{q=1}^{n-3}\sum_{k=1}^{n-q-2}\sum_{\ell=1}^{n-q-1-k}\frac{(n-2)!}{\binom{n-2}{q-1}}\binom{n-k-\ell-2}{q-1}\left(1-\binom{k+\ell}{k}^{-1}\right).
 \]
 Furthermore,
 \[
 \lim_{n\to\infty}\frac{s_n^+(p)}{n!} = 1.
 \]
\end{theorem}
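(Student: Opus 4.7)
The plan is to proceed in three stages.

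First, I would decode the shading. The top row is fully shaded, forcing any occurrence to realize the ``$4$'' of $\pi=2143$ by the value $n$. The bottom row is shaded in every column except the rightmost, which forces $\sigma_{i_2}$ (the value realizing the ``$1$'') to equal $\min(\sigma_1,\ldots,\sigma_{i_4})$, where $i_4$ is the position realizing the ``$3$''; values smaller than $\sigma_{i_2}$ are permitted only strictly past $i_4$. Combined with the order dictated by $\pi=2143$, $\sigma$ contains $p$ if and only if there exist $i_1<i_2<i_3<i_4$ with $\sigma_{i_3}=n$, $\sigma_{i_2}=\min(\sigma_1,\ldots,\sigma_{i_4})$, and $\sigma_{i_2}<\sigma_{i_1}<\sigma_{i_4}<n$.

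For the enumeration, I would fix a canonical occurrence per containing permutation (for example, the one minimizing $i_4$, then $i_2$) and parametrize by $q=\sigma_{i_2}$ together with two counts $k$ and $\ell$ encoding the positions before $i_2$ and between $i_3$ and $i_4$, respectively. Given such data, the factor $\binom{n-k-\ell-2}{q-1}$ counts the placements of the $q-1$ values less than $q$ into the positions strictly past $i_4$ (where they are forced); the factor $(q-1)!(n-q-1)!=(n-2)!/\binom{n-2}{q-1}$ accounts for the arrangements of the remaining values respecting the canonical constraint; and $1-\binom{k+\ell}{k}^{-1}$ is the probability that the $k$ prefix values and $\ell$ middle values are not in the unique extremal relative order in which no $i_1<i_2$ satisfies $\sigma_{i_1}<\sigma_{i_4}$. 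Summing over admissible $(q,k,\ell)$ yields the claimed formula.

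For the containment limit, I would show directly that avoidance has vanishing probability by splitting on the positions $P$ of $n$ and $Q$ of $1$ in $\sigma$. If $Q<P$ (probability $\to 1/2$), set $i_2=Q$ and $i_3=P$; then $\sigma_{i_2}=1$ is automatically the minimum, and an occurrence exists unless $Q=1$ or every entry of $\sigma_1,\ldots,\sigma_{Q-1}$ exceeds every entry of $\sigma_{P+1},\ldots,\sigma_n$ --- an event of conditional probability $\binom{n-P+Q-1}{Q-1}^{-1}$, which is $o(1)$ in expectation over $(P,Q)$. If $Q>P$ (probability $\to 1/2$), apply a parallel argument with $i_4=P+1$: avoidance then requires either $\sigma_{P+1}<\min(\sigma_1,\ldots,\sigma_{P-1})$ or an analogous extremal condition on the initial segment, both events again of vanishing probability. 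Since the two cases partition $S_n$ up to the negligible event $\{|P-Q|\leq 1\}$, the conclusion $s_n^+(p)/n!\to 1$ follows.

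The main obstacle is Case B ($Q>P$), where the ``prefix minimum'' is a random quantity rather than a fixed value, so the argument demands uniform conditional estimates. A union bound over choices of $i_4\in(P,Q)$ --- using the fact that $\sigma_{P+1}<\min(\sigma_1,\ldots,\sigma_{P-1})$ immediately kills every $i_4$ at once, while in the complementary case failure at every $i_4\in(P,Q)$ forces $\sigma_1,\ldots,\sigma_{Q-1}$ into a highly structured configuration --- should give the required estimate.
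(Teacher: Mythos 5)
Your decoding of the shading and the resulting occurrence criterion ($\sigma_{i_3}=n$, $\sigma_{i_2}=\min(\sigma_1,\dots,\sigma_{i_4})$, $\sigma_{i_2}<\sigma_{i_1}<\sigma_{i_4}<n$) are exactly right, and your enumeration plan is essentially the paper's, so let me flag the two places where your framing and the paper's diverge. The paper does not select a canonical occurrence: it partitions $S_n$ by intrinsic statistics of $\sigma$ alone ($q$ the smallest letter preceding $n$, $k$ the number of letters preceding $q$, $\ell$ the number of letters between $n$ and the first letter smaller than $q$) and then, within each class, counts the complement of the arrangements in which all $k$ prefix letters exceed all $\ell$ middle letters. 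You should adopt that framing: the factor $1-\binom{k+\ell}{k}^{-1}$ only makes sense when $\ell$ counts \emph{all} candidates for the letter realising the $3$, not the letters up to a minimal $i_4$; and $\binom{n-k-\ell-2}{q-1}$ is the number of interleavings of the $q-1$ letters below $q$ with the remaining $n-q-1-k-\ell$ large letters (a composition count), not a count of placements past $i_4$. These are presentational slips rather than gaps.

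For the limit your route is genuinely different from, and arguably cleaner than, the paper's. The paper obtains the lower bound by a long chain of inequalities applied to the exact formula, ending in harmonic-number asymptotics; you bound the avoidance probability directly by exhibiting one candidate occurrence in each of two cases. Case A ($1$ before $n$) is immediate: the conditional avoidance probability $\binom{n-P+Q-1}{Q-1}^{-1}$ sums to $O(n)$ over the $\Theta(n^2)$ pairs $(P,Q)$, giving $O(1/n)$ overall. In Case B the single choice $i_4=P+1$ already suffices, with no union bound over $i_4$: taking $i_2$ to be the position $j$ of $m=\min(\sigma_1,\dots,\sigma_{P-1})$, avoidance of this one configuration forces (i) $\sigma_{P+1}<m$, or (ii) $j=1$, or (iii) every letter left of position $j$ exceeds $\sigma_{P+1}$; by exchangeability these events have probabilities $1/P$, $1/(P-1)$ and (after summing over $j$) $O(H_P/P)$, so Case B contributes $O((\log n)^2/n)$ in total. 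Your approach yields the limit without the exact enumeration and would adapt to patterns for which no closed form is available; the paper's keeps the whole theorem inside one computation but gives nothing extra beyond that.
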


\begin{proof}
Note that if $p$ occurs in a permutation $\sigma\in S_n$, then a realisation of $p$ in~$\sigma$ 
 is a subword of the form $aqnb$, subject to the restriction $q<a<b<n$ such that every letter in 
 $\sigma$ smaller than $q$ appears after $b$. In particular, $n$ cannot be in the first place. 
 To calculate~$s_n^+(p)$ we count such permutations.

Assume $n$ is not in the first position and let $q$ be the smallest letter appearing before~$n$. 
 The permutation $\sigma$ contains a unique subword $\omega=qn\pi_1\ldots\pi_{q-1}$ where $\pi_i<q$ for all $i$. 
 Fixing $q<n-2$, there are $(q-1)!$ choices for this subword and
 the rest of the letters of $\sigma$ can be chosen freely, in $(n-q-1)!$ different ways. 
 To obtain a permutation $\sigma$ from the subword $\omega$, we have to insert $n-q-1$ letters. 
 Let~${\psi=(\psi_0,\dots,\psi_{q+1})}$ be the number of letters we insert into each of the parts of $\sigma$ separated by the letters of $\omega$.

For a fixed $\pi$ and a fixed partition $\psi$, there are $(n-q-1)!$ possible permutations of letters to be inserted,
 and we now count how many of these do not contain $p$. These are exactly the permutations where all the $\psi_0$ 
 letters before $q$ are larger than all the $\psi_2$ letters between~$n$ and $\pi_1$, and the remaining letters 
 can be chosen and permuted arbitrarily. First we pick the remaining letters, of which there are $n-q-\psi_0-\psi_2-1$ 
 to be picked from~$n-q-1$ total letters, and permuted in any way. Next we take the $\psi_0$ largest unselected letters 
 and insert these before $q$, in any order, and the final $\psi_2$ letters are inserted between $n$ and $\pi_1$, in any order. This gives us
 \[
 \binom{n-q-1}{n-q-\psi_0-\psi_2-1}(n-q-\psi_0-\psi_2-1)!\psi_0!\psi_2!=\frac{(n-q-1)!}{\binom{\psi_0+\psi_2}{\psi_0}}
 \]
 permutations avoiding the mesh pattern for a fixed $\pi$ and $\psi$, and subtracting this from $(n-q-1)!$ gives the number of permutations containing $p$.

Summing over all possible choices of $\pi$ and $\psi$, we see that there are
 \begin{align*}
 a_{q,n}&\coloneqq(q-1)!\sum_{\psi} (n-q-1)!\left(1-\binom{\psi_0+\psi_2}{\psi_0}^{-1}\right)\\
 &=(q-1)!\sum_{\psi_0=1}^{n-q-2}\sum_{\psi_2=1}^{n-q-\psi_0-1}\sum_{\psi_1,\psi_3,\dots,\psi_{q+1}}(n-q-1)!\left(1-\binom{\psi_0+\psi_2}{\psi_0}^{-1}\right).
 \end{align*}
 permutations containing the pattern $p$ such that $q$ is the smallest letter preceding~$n$.

Note that the innermost sum does not depend on the value of either of the parameters we are summing over, 
 so it is the same as multiplying by the number of ordered partitions of $n-q-\psi_0-\psi_2-1$ into $q$ nonnegative parts. So the sum is equal to:
 \[
 a_{q,n}=(q-1)!\sum_{\psi_0=1}^{n-q-2}\sum_{\psi_2=1}^{n-q-\psi_0-1}\binom{n-\psi_0-\psi_2-2}{q-1}(n-q-1)!\left(1-\binom{\psi_0+\psi_2}{\psi_0}^{-1}\right).
 \]
 Summing over all possible $q$ (and replacing $\psi_0$ and $\psi_2$ by $k$ and $\ell$, respectively) gives the desired formula for $s_n^+(p)$.

Next we compute the asymptotic behaviour of $s_n^+(p)/n!$. First note that $s_n^+(p)\le n!$ as there are at most $n!$ permutations of length $n$,
 so $\frac{s_n^+(p)}{n!}\le \frac{n!}{n!}=1$. Now consider the lower bound.
 
 \begin{align}
   s_n^+(p) &= \sum_{q=1}^{n-3}\sum_{k=1}^{n-q-2}\sum_{\ell=1}^{n-q-1-k}\frac{(n-2)!}{\binom{n-2}{q-1}}\binom{n-k-\ell-2}{q-1}\left(1-\binom{k+\ell}{k}^{-1}\right)\label{nc1}\\
   &\ge \sum_{q=1}^{n-3}\frac{(n-2)!}{\binom{n-2}{q-1}}\sum_{k=1}^{n-q-2}\sum_{\ell=1}^{n-q-1-k}\binom{n-k-\ell-2}{q-1}\frac{k}{k+1}\label{nc2}\\
   &= \sum_{q=1}^{n-3}\frac{(n-2)!}{\binom{n-2}{q-1}}\sum_{k=1}^{n-q-2}\frac{k}{k+1}\binom{n-k-2}{q}\label{nc3}\\
   &= \sum_{q=1}^{n-3}\frac{(n-2)!}{\binom{n-2}{q-1}}\left[\binom{n-2}{q+1}-\sum_{k=1}^{n-q-2}\frac{1}{k+1}\binom{n-k-2}{q}\right]\label{nc4}\\
   &\ge \sum_{q=1}^{n-3}\frac{(n-2)!}{\binom{n-2}{q-1}}\left[\binom{n-2}{q+1}-\binom{n-3}{q}\sum_{k=1}^{n-q-2}\frac{1}{k+1}\right]\label{nc5}\\
   &\ge \sum_{q=1}^{n-3}\frac{(n-2)!}{\binom{n-2}{q-1}}\left[\binom{n-2}{q+1}-\binom{n-3}{q}\ln(n-q-1)\right]\label{nc6}\\
   &\ge   (n-2)!\left[\sum_{q=1}^{n-3}\frac{(n-q-1)(n-q-2)}{q(q+1)}-\frac{\ln(n-2)}{n-2}\sum_{q=1}^{n-3}\frac{(n-q-1)(n-q-2)}{q}\right]\label{nc7}\\
   &=   (n-2)!\left[(n+1)(n-2)-2(n-1)H_{n-2}\right]\label{nc8}\\&\qquad\qquad-(n-3)!\ln(n-2)\left[(n-2)(n-1)H_{n-3}-\frac{(n-3)(3n-4)}{2}\right]\nonumber\\
   &=   (n-2)!(n+1)(n-2)-2(n-1)!H_{n-2}-\ln(n-2)(n-1)!H_{n-3}\nonumber\\&\qquad\qquad+\frac{(n-3)!(n-3)(3n-4)\ln(n-2)}{2}\label{nc9}
 \end{align}
 where $H_n$ is the $n$-th Harmonic number.
 To get \eqref{nc2} we set $\binom{k+\ell}{k}$ to $\binom{k+1}{k}$,
 \eqref{nc3} is given by the identity $\sum_{a=b}^c\binom{a}{b}=\binom{c+1}{b+1}$,
 \eqref{nc4} we get by using $\frac{k}{k+1}=1-\frac{1}{k+1}$ and then sum of binomial coefficients identity again,
 \eqref{nc5} is obtained by setting the second $k$ to $1$,~\eqref{nc6} we get from the inequality $\sum_{k=1}^{n}\frac{1}{k}=H_n\le ln(n)+1$,
 \eqref{nc7} is given by dividing the binomial coefficients by~$\binom{n-2}{q-1}$,
 \eqref{nc8} we get by multiplying out the polynomials and simplifying the sums,
 and \eqref{nc9} is simply expanding the brackets.

Therefore, we get the following lower bound, which combined with the upper bound completes the proof:
 \begin{align*}
   \lim_{n\to\infty} \frac{s_n^+(p)}{n!}
            \ge\lim_{n\to\infty} \biggl[\frac{(n+1)(n-2)}{n(n-1)}&-\frac{2H_{n-2}}{n}-\frac{\ln(n-2)H_{n-3}}{n}\\&+\frac{(n-3)(3n-4)\ln(n-2)}{2n(n-1)(n-2)}\biggr]=1.
 \end{align*}
\end{proof}

Combining Lemma~\ref{lem:subshade} and Theorem~\ref{thm:nocorner} gives the following corollary.

\begin{corollary}
Let $p=(2143,R)$ where $R\subseteq([3]\times 0)\cup([4]\times 4)$ then,
 \[
 \lim_{n\to\infty}\frac{s_n^+(p)}{n!} = 1.
 \]
\end{corollary}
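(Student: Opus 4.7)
The plan is a direct squeeze argument using the two ingredients already at hand: Lemma~\ref{lem:subshade} and Theorem~\ref{thm:nocorner}. Let $R^\star = ([3]\times 0)\cup([4]\times 4)$ denote the full shading treated in Theorem~\ref{thm:nocorner}, and write $p^\star = (2143, R^\star)$. For any $R \subseteq R^\star$, setting $p = (2143,R)$ gives two mesh patterns with the same underlying permutation $2143$ but with $R \subseteq R^\star$, so Lemma~\ref{lem:subshade} applies.

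From Lemma~\ref{lem:subshade} we obtain $s_n^+(p) \ge s_n^+(p^\star)$, since the pattern with less shading is contained whenever the more shaded one is. Dividing by $n!$ and taking the limit, Theorem~\ref{thm:nocorner} gives
\[
\liminf_{n\to\infty}\frac{s_n^+(p)}{n!} \ge \lim_{n\to\infty}\frac{s_n^+(p^\star)}{n!} = 1.
\]
On the other hand, $s_n^+(p) \le n!$ holds trivially since $s_n^+(p)$ counts a subset of $S_n$, so $s_n^+(p)/n! \le 1$ for every $n$. Combining the two bounds forces the limit to equal $1$.

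There is no real obstacle here: all of the difficult enumerative and asymptotic work has already been done in Theorem~\ref{thm:nocorner}, and the corollary is essentially a monotonicity observation packaged via Lemma~\ref{lem:subshade}. The only thing worth verifying explicitly is that the hypothesis $R \subseteq R^\star$ does indeed put us in a position to apply Lemma~\ref{lem:subshade}, which it does because both mesh patterns share the permutation $\pi = 2143$.
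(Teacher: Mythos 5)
Your proof is correct and matches the paper's intended argument exactly: the paper derives this corollary by "combining Lemma~\ref{lem:subshade} and Theorem~\ref{thm:nocorner}," which is precisely your squeeze between $s_n^+(p)\ge s_n^+(p^\star)$ and the trivial bound $s_n^+(p)\le n!$. Nothing further is needed.
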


\begin{example}
Let $p=$
 \patt{0.5}{4}{2,1,4,3}[0/0,4/4],
 then
 $\displaystyle
 \lim_{n\to\infty}\frac{s_n^+(p)}{n!} = 1.
 $
\end{example}

We can apply the Shading Lemma to get the following corollary.

\begin{corollary}
 Suppose $p$ is any of the following mesh patterns
 \begin{align*}
  &\patt{0.25}{4}{2,1,4,3}[0/0,1/0,2/0,3/0,0/4,1/4,2/4,3/4,4/4,0/1]
  \patt{0.25}{4}{2,1,4,3}[0/0,1/0,2/0,3/0,0/4,1/4,2/4,3/4,4/4,0/2]
  \patt{0.25}{4}{2,1,4,3}[0/0,1/0,2/0,3/0,0/4,1/4,2/4,3/4,4/4,1/1]
  \patt{0.25}{4}{2,1,4,3}[0/0,1/0,2/0,3/0,0/4,1/4,2/4,3/4,4/4,1/2]
  \patt{0.25}{4}{2,1,4,3}[0/0,1/0,2/0,3/0,0/4,1/4,2/4,3/4,4/4,0/1,0/2]
  \patt{0.25}{4}{2,1,4,3}[0/0,1/0,2/0,3/0,0/4,1/4,2/4,3/4,4/4,1/2,1/1]
  \patt{0.25}{4}{2,1,4,3}[0/0,1/0,2/0,3/0,0/4,1/4,2/4,3/4,4/4,0/1,1/1]
  \patt{0.25}{4}{2,1,4,3}[0/0,1/0,2/0,3/0,0/4,1/4,2/4,3/4,4/4,1/2,0/2]\\
  &\patt{0.25}{4}{2,1,4,3}[0/0,1/0,2/0,3/0,0/4,1/4,2/4,3/4,4/4,4/2]
  \patt{0.25}{4}{2,1,4,3}[0/0,1/0,2/0,3/0,0/4,1/4,2/4,3/4,4/4,4/3]
  \patt{0.25}{4}{2,1,4,3}[0/0,1/0,2/0,3/0,0/4,1/4,2/4,3/4,4/4,3/3]
  \patt{0.25}{4}{2,1,4,3}[0/0,1/0,2/0,3/0,0/4,1/4,2/4,3/4,4/4,3/2]
  \patt{0.25}{4}{2,1,4,3}[0/0,1/0,2/0,3/0,0/4,1/4,2/4,3/4,4/4,3/3,3/2]
  \patt{0.25}{4}{2,1,4,3}[0/0,1/0,2/0,3/0,0/4,1/4,2/4,3/4,4/4,4/3,4/2]
  \patt{0.25}{4}{2,1,4,3}[0/0,1/0,2/0,3/0,0/4,1/4,2/4,3/4,4/4,3/3,4/3]
  \patt{0.25}{4}{2,1,4,3}[0/0,1/0,2/0,3/0,0/4,1/4,2/4,3/4,4/4,3/2,4/2]
 \end{align*}
 or obtained by the union of the shaded boxes from a mesh pattern on the top row and a mesh pattern on the bottom row. Then
 \[
 s_n^+(p) = \sum_{q=1}^{n-3}\sum_{k=1}^{n-q-2}\sum_{\ell=1}^{n-q-1-k}\frac{(n-2)!}{\binom{n-2}{q-1}}\binom{n-k-\ell-2}{q-1}\left(1-\binom{k+\ell}{k}^{-1}\right).
 \]
 and
 \[
 \lim_{n\to\infty}\frac{s_n^+(p)}{n!} = 1.
 \]
 Moreover, if $p$ obtained by taking a subset of the shadings of any of the above mesh patterns then the containment limit is also $1$.
\end{corollary}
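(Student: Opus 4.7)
The plan parallels the proof of Corollary~\ref{cor:topbottom1}: the aim is to show that each of the displayed mesh patterns has the same set of containing permutations as the base pattern $p_0=(2143,R_0)$ of Theorem~\ref{thm:nocorner}, where $R_0=([3]\times 0)\cup([4]\times 4)$. Once the containment sets agree, both the enumeration formula and the conclusion $s_n^+(p)/n!\to 1$ are inherited directly from that theorem. For subsets of the displayed shadings, I would combine Lemma~\ref{lem:subshade} with the trivial bound $s_n^+(p)/n!\le 1$ to squeeze the limit.

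For the eight displayed patterns that add a single extra shaded box to $R_0$, I would apply the Shading Lemma \cite[Lemma 3.11]{HJSVU15}. For each candidate extra box $(i,j)$, verifying the hypothesis amounts to confirming that any hypothetical occurrence of $p_0$ with a point inside $R_\eta(i,j)$ can be replaced by an alternative occurrence using that point as one of the pattern's dots; this is a case-by-case check governed by the adjacency of the dots of $2143$ to $(i,j)$ and by which neighbouring boxes already lie in $R_0$. For the eight patterns that shade two adjacent extra boxes, I would invoke the Simultaneous Shading Lemma \cite[Lemma 7.6]{CTU15}. For unions formed from a pattern in the first displayed row together with a pattern in the second displayed row, the Simultaneous Shading Lemma again applies, since the first-row extras lie in columns $0,1$ and the second-row extras lie in columns $3,4$, so the two collections of extras act on disjoint portions of the grid and their hypotheses cannot interfere with one another.

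For the subset claim, given $p'=(2143,R')$ with $R'\subseteq R$ for some listed shading $R$, Lemma~\ref{lem:subshade} yields $s_n^+(p')\ge s_n^+((2143,R))=s_n^+(p_0)$, so $\liminf_{n\to\infty}s_n^+(p')/n!\ge 1$, and the limit equals $1$. The main obstacle is the geometric verification of the Shading Lemma's hypothesis for each individual extra box; this is particularly delicate for the boxes $(3,2),(3,3),(4,2),(4,3)$, which sit in the region influenced by the single unshaded cell $(4,0)$ in the bottom row, where one must confirm that the extra freedom afforded by $(4,0)$ being unshaded does not permit an alternative occurrence passing through the candidate box. Everything else reduces to direct bookkeeping and an appeal to previously established results.
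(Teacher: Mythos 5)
Your proposal matches the paper's own (implicitly sketched) argument: the listed patterns are shown to coincide with the pattern of Theorem~\ref{thm:nocorner} via the Shading Lemma and, for combined shadings, the Simultaneous Shading Lemma, while the subset claim follows from Lemma~\ref{lem:subshade} together with the trivial bound $s_n^+(p)\le n!$. There is no gap beyond the routine box-by-box verification of the Shading Lemma's hypotheses, which the paper likewise leaves to the reader.
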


\section{Questions and Conjectures}\label{sec:5}
We have presented some results on the asymptotics of mesh pattern containment, but many questions remain. 
 In this section we finish with some open questions and conjectures.

Lemma~\ref{lem:subshade}, Theorem~\ref{thm:sideshade}, and Theorem~\ref{thm:nocorner} imply that if
$$p=\patt{0.25}{4}{2,1,4,3}[0/0,1/0,2/0,3/0,4/1,4/2,4/3,0/4,1/4,2/4,3/4,4/4], \;\;\;\;\;\;\;\;\;\text{then} \;\;\;\;\;\;\;\;\;\frac12\le\lim_{n\to\infty}\frac{s_n^+(p)}{n!}\le1.$$
 Permutations that contain $p$ must be constructed by taking the skew-sum of a permutation that contains the mesh pattern in Theorem~\ref{thm:sideshade} along with any other permutation,
 this leads us to claim the following conjecture.
 
\begin{conjecture}
If $p=\patt{0.25}{4}{2,1,4,3}[0/0,1/0,2/0,3/0,4/1,4/2,4/3,0/4,1/4,2/4,3/4,4/4]$, then
 $ \displaystyle\lim_{n\to\infty}\frac{s_n^+(p)}{n!} = \frac12$.
\end{conjecture}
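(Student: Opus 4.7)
The plan is to establish a structural characterization of containment of $p$ as a skew-sum decomposition, then enumerate and pass to the limit. Throughout, let $q$ denote the mesh pattern of Theorem~\ref{thm:sideshade} and write $a_c := s_c^+(q)$, so that $a_c/c! \to \tfrac12$ by that theorem.

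First, I would prove: $\sigma \in S_n$ contains $p$ if and only if $\sigma = \tau \ominus \rho$ (skew sum) for some $\tau$ containing $q$. Suppose $\sigma$ has an occurrence of $p$ at positions $p_1 < p_2 < p_3 < p_4$ realising $2143$, and set $v_2 = \sigma_{p_2}$. The top row shading of $p$ forces $\sigma_{p_3} = n$. The four shaded boxes on the bottom row (all but $(4,0)$) force every value $< v_2$ to sit at positions $> p_4$, while the three shaded boxes in the rightmost column (all but $(4,0)$) forbid any value $> v_2$ at positions $> p_4$. Setting $c := p_4$, positions $1,\dots,c$ therefore carry exactly the top $c$ values of $\sigma$, giving $\sigma = \tau \ominus \rho$ with $|\tau| = c$. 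Inside $\tau$, the four original dots form an occurrence of $q$: the ``$4$''-dot is the maximum, the ``$1$''-dot is the minimum, and the ``$3$''-dot sits at the last position of $\tau$, verifying the three shaded sides of $q$. Conversely, for any $\tau\in S_c$ containing $q$ and any $\rho\in S_{n-c}$, the four dots of the $q$-occurrence in $\tau$ witness $p$ in $\sigma = \tau \ominus \rho$: the entries of $\rho$ have values below $v_2$ at positions past $p_4$, which precisely fills the sole unshaded box $(4,0)$ of $p$.

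Next I would argue uniqueness. The shading of $q$ together with the pattern $2143$ forces the value $1$ to occur at a position before the value $|\tau|$ in $\tau$. However, in any non-trivial skew sum $\tau_1 \ominus \tau_2$ the maximum lies in $\tau_1$ and the minimum in $\tau_2$, placing $1$ after the maximum. Hence $q$ occurs only in skew-indecomposable permutations, so $\tau$ must be the initial skew-indecomposable factor of $\sigma$ and the cut $c = |\tau|$ is pinned down. Summing over $c$ and free choices of $\rho$ gives
\[
 s_n^+(p) \;=\; \sum_{c=4}^{n} a_c\,(n-c)!, \qquad\text{hence}\qquad \frac{s_n^+(p)}{n!} \;=\; \sum_{c=4}^{n} \frac{a_c}{c!}\binom{n}{c}^{-1}.
\]

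Finally I would estimate. The $c = n$ term is $a_n/n! \to \tfrac12$ by Theorem~\ref{thm:sideshade}. For the remaining terms, $a_c/c! \le 1$, and it suffices to show $\sum_{c=4}^{n-1}\binom{n}{c}^{-1} \to 0$: the middle terms $4 \le c \le n-4$ are each at most $\binom{n}{4}^{-1} = O(n^{-4})$ and sum to $O(n^{-3})$, while the three end terms $c \in \{n-3,n-2,n-1\}$ equal $\binom{n}{3}^{-1}+\binom{n}{2}^{-1}+\binom{n}{1}^{-1} = O(1/n)$ by the symmetry $\binom{n}{c} = \binom{n}{n-c}$. Thus the tail vanishes and the limit is $\tfrac12$. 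The main obstacle is the first paragraph: the shading conditions of $p$ must be parsed carefully to locate the skew cut $c = p_4$ and to recognise the embedded occurrence of $q$ in the top factor; once this is in hand, the crucial structural remark that $q$ lives only on skew-indecomposable permutations is what makes the enumeration clean.
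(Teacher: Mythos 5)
This statement is left as an open conjecture in the paper: the authors only establish the two-sided bound $\tfrac12\le\lim_{n\to\infty}s_n^+(p)/n!\le1$ (via Lemma~\ref{lem:subshade} applied to Theorems~\ref{thm:sideshade} and~\ref{thm:nocorner}) and remark informally that permutations containing $p$ arise as skew sums $\tau\ominus\rho$ with $\tau$ containing the pattern $q$ of Theorem~\ref{thm:sideshade}. Your argument turns that heuristic into a complete and, as far as I can check, correct proof, so it resolves the conjecture rather than reproving something in the paper. The structural characterisation is right: the top row forces the ``4''-dot onto the value $n$; the four shaded bottom boxes push every value below $v_2$ past position $p_4$, the shaded boxes $(4,1),(4,2),(4,3)$ (together with $(4,4)$ and the fact that the occurrence values themselves sit at positions $\le p_4$) forbid any value above $v_2$ there, and a counting of values versus positions then yields the skew cut at $c=p_4$ with the four dots forming an occurrence of $q$ in the top factor; the converse direction is immediate since the sole unshaded box $(4,0)$ absorbs $\rho$. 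The one genuinely new ingredient beyond the paper's remark is your uniqueness step: an occurrence of $q$ forces the global minimum to precede the global maximum, so $q$ lives only on skew-indecomposable permutations, pinning $\tau$ down as the first skew-indecomposable factor and making $(\tau,\rho)\mapsto\tau\ominus\rho$ a bijection. This gives $s_n^+(p)=\sum_{c=4}^{n}s_c^+(q)\,(n-c)!$, and your tail estimate $\sum_{c=4}^{n-1}\binom{n}{c}^{-1}=O(1/n)$ correctly isolates the $c=n$ term, whose limit is $\tfrac12$ by Theorem~\ref{thm:sideshade}. Two cosmetic quibbles: the rightmost column of $p$ has four shaded boxes (including $(4,4)$, shared with the top row), not three; and it is worth stating explicitly that $\sigma_{p_4}>v_2$, so no small value can occupy position $p_4$ itself. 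Neither affects the argument.
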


Many of the examples we consider have entire rows or columns shaded, so could actually be considered as bivincular patterns.
 So we wonder if a complete analysis of length $3$ or $4$ bivincular patterns is possible?
 Also, we have only considered $s_n^+(P)$, where $P$ is a single pattern. What can we determine for the containment limit
 when $P$ is a set of mesh patterns?

One motivation for this work was the application of mesh patterns to the homology of permutation complexes, such as those studied in \cite{CR20}.
 For example, we initially believed that each occurrence in~${\pi\in S_n}$ of the mesh pattern 
 \begin{center}\patt{0.3}{4}{2,1,4,3}[0/0,2/2,4/4]\end{center} 
 contributes to the homology of the permutation complex
 $X(12\dots n,\pi)$, which is topologically the suspension of the simplicial complex $Y(\pi)$ whose faces are the increasing subsequences of $\pi$.
 This was motivated by the observation that the topological realisation of the subcomplex corresponding to an occurrence of such a pattern in $Y(\pi)$
 is a circle which the shading prevents from being coned off by the addition of a single point. Unfortunately, it can be coned off by the addition 
 of two points, as demonstrated by taking the permutation $\pi=214635$ and the occurrence $2163$ of the pattern.
 Which leads us to pose the question: 
 \begin{question}
 Can occurrences of mesh patterns be used to compute the Betti numbers of permutation complexes?
 Or can we define a set of mesh patterns $P$ such that if $\pi$ avoids $P$ its permutation complex is contractible?
 \end{question}

\section*{Acknowledgements}
The \emph{Permuta} python package \cite{permuta} was used for some exploratory computations.

\newcommand{\etalchar}[1]{$^{#1}$}
 \newcommand{\noop}[1]{}

\end{document}